%% file: ACT25_12pp.tex
\documentclass[submission,copyright,creativecommons]{eptcs}
\providecommand{\event}{ACT 2025} 

\usepackage{iftex}

\ifpdf
  \usepackage{underscore}         
  \usepackage[T1]{fontenc}        
\else
  \usepackage{breakurl}           
\fi

\input{preamble}

\title{A Critical Pair Enumeration Algorithm for String Diagram Rewriting\footnote{%
We are grateful to organisers of Adjoint School 2024 in which our collaboration started.}}
\author{%
Anna Matsui
\institute{Johns Hopkins University, USA}
\email{amatsui1@jhu.edu}
\and
Innocent Obi
\institute{University of Washington, USA}
\email{innoobi@cs.washington.edu}
\and
Guillaume Sabbagh
\institute{University of Technology of Compiègne, France\thanks{G.S.\ was funded by BNP Paribas CIB EMEA and the French Ministry of Research under CIFRE project No. 2021/1502.}}
\email{guillaume.sabbagh@utc.fr}
\and
Leo Torres
\institute{Universidad Nacional de Córdoba, Argentina}
\email{leo.torres@mi.unc.edu.ar}
\and
Diana Kessler
\institute{Tallinn University of Technology, Estonia\thanks{D.K.\ was funded by an Advanced Research + Invention Agency (ARIA) Safeguarded AI: TA1.1 Theory grant.}}
\email{diana-maria.kessler@taltech.ee}
\and
Juan F. Meleiro
\institute{University of São Paulo, Brazil}
\email{juan.meleiro@usp.br}
\and
Koko Muroya
\institute{National Institute of Informatics, Japan \& Ochanomizu University, Japan\thanks{K.M.\ was funded by JSPS, KAKENHI Project No.\ 22K17850, Japan.}}
\email{kmuroya@is.ocha.ac.jp}
}

\def\titlerunning{A Critical Pair Enumeration Algorithm for String Diagram Rewriting}
\def\authorrunning{A. Matsui et al.}

\begin{document}

\maketitle              
\begin{abstract}
 Critical pair analysis provides a convenient and computable criterion of confluence, which is a fundamental property in rewriting theory, for a wide variety of rewriting systems. Bonchi et al. showed validity of critical pair analysis for rewriting on string diagrams in symmetric monoidal categories. This work aims at automation of critical pair analysis for string diagram rewriting, and develops an algorithm that implements the core part of critical pair analysis. The algorithm enumerates all critical pairs of a given left-connected string diagram rewriting system, and it can be realised by concrete manipulation of hypergraphs. We prove correctness and exhaustiveness of the algorithm, for string diagrams in symmetric monoidal categories without a Frobenius structure.

\end{abstract}
\section{Introduction}

\subsection{Rewriting Theory and Critical Pair Analysis}

Mathematical reasoning often involves derivation of a (complex) equation from known (typically simpler) equations, which is sometimes called \emph{equational reasoning}. Equations can be between various mathematical objects, e.g. terms, programs, graphs, processes, and objects/morphisms in a category.

Rewriting theory has been established (see e.g.\ \cite{TRStext,GraphTransfI}), with equational reasoning as one application.
The starting point is to turn known equations $a = a'$ into directed\footnote{The direction is typically chosen so that $a'$ is ``simpler'' than $a$.} \emph{rewrite rules} $a \multimap a'$. Each step $b \to b'$ of rewrite modifies a part of $b$ by applying one rewrite rule. Derivation of an equation $c \overset{?}{=} c'$ then boils down to finding some $d$ with two chains $c \rightarrow \cdots \rightarrow d$ and $c' \rightarrow \cdots \rightarrow d$ of rewrites. These chains altogether imply a chain of equations $c = \cdots = d = \cdots = c'$, which concludes the desired equation $c \overset{?}{=} c'$.

\emph{Confluence} is a fundamental property in rewriting theory, intuitively meaning that ordering of rewrites does not matter. Rewrites $\rightarrow$ are said to be confluent if any two diverging chains $b \leftarrow \cdots \leftarrow a \rightarrow \cdots \rightarrow b'$ of rewrites are \emph{joinable}, that is, there exists $c$ with converging chains $b \rightarrow \cdots \rightarrow c \leftarrow \cdots \leftarrow b'$.
\emph{Local confluence} is a variant of confluence in which the two diverging chains are in fact given by two single rewrites, i.e.\ $b \leftarrow a \rightarrow b'$.

For some pairs of two diverging rewrites $b \leftarrow a \rightarrow b'$, joinability is obvious. An example is the so-called \emph{parallel} case, namely when the two rewrites change different, independent, parts of $a$. Consequently, checking local confluence boils down to analysing joinability of non-parallel pairs.

\emph{Critical pair analysis} is a well-established technique for automatically checking local confluence, providing a convenient and computable criterion. It reduces local confluence to joinability of \emph{critical pairs} that are finitely many representatives of non-parallel pairs. Critical pairs can be enumerated from a given set of rewrite rules $a \multimap a'$. This enumeration plays a central role in automating local-confluence check.

\subsection{String Diagram Rewriting}

\emph{String diagrams} \cite{JoyalS91,Selinger10} provide a graphical syntax of category theory. They are useful in equational reasoning on morphisms of a category, because they trivialise certain equations as graph isomorphisms.

Rewriting theory for string diagrams has been developed by Bonchi et al.\ \cite{DBLP:journals/jacm/BonchiGKSZ22,DBLP:journals/mscs/BonchiGKSZ22,DBLP:journals/mscs/BonchiGKSZ22a}, targeting at string diagrams for symmetric monoidal categories (with and without a Frobenius structure).
String diagrams are combinatorially represented using hypergraphs, and rewrites on string diagrams are categorically modelled using \emph{double pushout rewriting} (DPO rewriting in short) \cite{ehrigDPO1973}. A key concept in string diagram rewriting theory is that of \emph{interface}. An interface of a hypergraph specifies how other hypergraphs can be connected to the hypergraph.

Bonchi et al.\ showed validity of critical pair analysis for string diagram rewriting \cite{DBLP:journals/mscs/BonchiGKSZ22a}. They defined critical pairs for an adaptation of DPO rewriting (dubbed \emph{convex\footnote{Convexity is for dealing with the absence of a Frobenius structure.} DPOI rewriting}) that takes interface into account, and proved that joinability of critical pairs implies local confluence.
Their development focuses on a theoretical side, and automation, which is an important aspect of critical pair analysis, has not been investigated.

\subsection{Contributions}

We aim at automation of critical pair analysis for string diagram rewriting, and develop an algorithm that implements the core part of the automation. The algorithm enumerates all critical pairs for a given set of DPOI rewrite rules.
We focus on the so-called \emph{left-connected} DPOI rewrite rules \cite{DBLP:journals/mscs/BonchiGKSZ22,DBLP:journals/mscs/BonchiGKSZ22a}. Left-connectivity allows us to reduce enumeration of critical pairs to enumeration of certain cospans in the category of hypergraphs. While it is an arguably powerful restriction, it still accommodates various concrete string diagram rewriting systems from the literature \cite{DBLP:conf/birthday/FioreC13,DBLP:conf/birthday/Ghica13,LAFONT2003257}.

Each critical pair is associated with two DPOI rewrite rules, which are given by spans $L_1 \leftarrow K_1 \rightarrow R_1$ and $L_2 \leftarrow K_2 \rightarrow R_2$ in the category of hypergraphs. Thanks to left-connectivity, the critical pair is uniquely determined by a certain cospan of the form $L_1 + L_2 \twoheadrightarrow S \leftarrow J$. Its left leg is, in particular, an epimorphism given by the coupling of monomorphisms.

Our key idea is that the cospan, in particular the hypergraph $S$, can be generated by suitably \emph{gluing} hyperedges and nodes of $L_1 + L_2$ (i.e.\ the hypergraph that puts $L_1$ and $L_2$ in parallel). We observe that the gluing process can be realised in two steps: (1) repeatedly merge a hyperedge from $L_1$ with a hyperedge from $L_2$, and (2) repeatedly merge a node from $L_1$ with a node from $L_2$ without merging any hyperedges.

Our contributions can be summarised as follows.
\begin{itemize}
 \item We develop an algorithm (Algo.~\ref{algo3}) that enumerates all critical pairs of a given set of left-connected DPOI rewrite rules by implementing the two-fold gluing process.
 \item We prove that the algorithm generates all critical pairs and nothing else (correctness and exhaustiveness; Thm.~\ref{thm:main}).
 \item We provide a proof-of-concept Haskell implementation\footnote{available online at \url{https://github.com/GuiSab/hypergraphrewriting}}.
 \item We present an optimised algorithm (Algo.~\ref{algo4}) that enumerates less but sufficient critical pairs to decide local confluence by only performing the first step of the two-fold gluing process.
\end{itemize}

\paragraph{Organisation.}
Sec.~\ref{sec:definitions} recalls relevant concepts (e.g.\ hypergraph, interface, DPOI rewriting, critical pair) from string diagram rewriting theory \cite{DBLP:journals/jacm/BonchiGKSZ22,DBLP:journals/mscs/BonchiGKSZ22,DBLP:journals/mscs/BonchiGKSZ22a}. Sec.~\ref{sec:algo} presents our main contribution, the critical pair enumeration algorithm with a proof of its correctness and exhaustiveness. Sec.~\ref{sec:opt} provides the optimised algorithm, and Sec.~\ref{sec:conclusion} concludes the paper.
Examples and some proofs can be found in the Appendix.

\paragraph{Related work.}
For term rewriting, rewrite rules typically use variables as placeholders to succinctly represent a family of rewrite rules, e.g.\ $x + y \multimap y + x$. To deal with variables, enumeration of critical pairs employs a technique called \emph{unification}.
In contrast, for string diagram rewriting, rewrite rules are always concrete without placeholders. We can therefore take a direct approach and generate a critical pair by suitably gluing hyperedges and nodes of left-hand sides of rewrite rules.
There are some attempts at enumerating critical pairs for variations of graph rewriting (graph transformation), e.g.\ \cite{DBLP:conf/rta/Mimram10,10.1007/978-3-319-40530-8_8,DBLP:phd/ethos/Hristakiev18}.

\section{Critical Pairs for String Diagram Rewriting} \label{sec:definitions}

We denote the composition of morphisms $f \colon A \to B$ and $g \colon B \to C$ by $f ; g$, and
coprojections of a coproduct by $\iota_1,\iota_2$.
Given a set $A$, the free monoid on $A$ is denoted by $A^*$.
Pointwise application a function $f \colon A \to B$ over a list of elements yields $f^* \colon A^* \to B^*$.
Let $\mathbb{N}$ be the set of natural numbers.

\subsection{Hypergraphs with Interface}

When a symmetric monoidal category is equipped with a Frobenius structure, string diagrams in the category can be combinatorially represented as (edge-labelled) hypergraphs with interface \cite{DBLP:journals/jacm/BonchiGKSZ22}.

\begin{definition}[Hypergraphs]
A (directed) \textbf{hypergraph} is a tuple $G = (V, E, s : E \rightarrow V^{*}, t : E \rightarrow V^{*})$ where $V$ and $E$ are finite sets of nodes and hyperedges, $s$ maps each hyperedge to a list of source nodes and $t$ maps each hyperedge to a list of target nodes. The \textbf{arity} of a hyperedge is the number of its sources, the \textbf{coarity} of a hyperedge is the number of its targets.

We refer to $V$ as $\Nodes(G)$ and $E$ as $\HEdges(G)$.
 
Let $\sigma$ be an alphabet, a \textbf{signature} $\Sigma$ on $\sigma$ is a subset of $\sigma \times \mathbb{N} \times \mathbb{N}$. A triplet $(x,n,m)$ represents a label $x$ for morphisms with arity $n$ and coarity $m$.
A $\Sigma$\textbf{-labelled hypergraph} ($\Sigma$-hypergraph in short) is a hypergraph equipped with a labelling function  $l : E \to \Sigma$ such that $l_E$ maps a hyperedge with arity $n$ and coarity $m$ to a triplet $(x,n,m)$.
\end{definition}

\begin{definition}[Hypergraph morphisms]
A \textbf{$\Sigma$-hypergraph morphism} between $\Sigma$-hypergraphs $(V_0,\allowbreak E_0,\allowbreak s_0,\allowbreak t_0,\allowbreak l_0)$ and $(V_1, E_1, s_1, t_1,l_1)$ is a pair of functions $f_V : V_0 \to V_1$ and $f_E : E_0 \to E_1$ that respects sources, targets and labels; that is, that satisfies
$f_V^* \circ s_0 = s_1 \circ f_E$, $f_V^* \circ t_0 = t_1 \circ f_E$ and $l_0 = l_1 \circ f_E$.
\end{definition}

Given a signature $\Sigma$, $\Sigma$-hypergraphs and $\Sigma$-hypergraph morphisms form a category $\Hyp_\Sigma$.
It has all small limits and colimits (since it is a presheaf category \cite[pp. 18-19]{DBLP:journals/jacm/BonchiGKSZ22}); in particular, it has pushouts, coproducts and coequalizers. We can spell them out in set-theoretic terms, which makes them suitable for an algorithmic implementation. 

Interface specifies nodes of a hypergraph to which other hypergraphs can be connected.
\begin{definition}[$\Sigma$-hypergraph with interfaces]
A discrete $\Sigma$-hypergraph is a $\Sigma$-hypergraph with no hyperedges (i.e. $G$ is discrete if $\HEdges(G)$ is the empty set, it only contains nodes). 
A $\Sigma$\textbf{-hypergraph with interface} is a cospan $\catcospan n G m$ in $\Hyp_{\Sigma}$ where $n$, $m$ are finite discrete $\Sigma$-hypergraphs. 
\end{definition}
The discrete hypergraphs $n$ and $m$ specify input interface and output interface, respectively.
We sometimes identify a $\Sigma$-hypergraph with interface $\catcospan n G m$ by a single morphism $G \leftarrow n + m$.

For a general symmetric monoidal category without a Frobenius structure, the combinatorial representation of string diagrams requires extra conditions on hypergraphs \cite{DBLP:journals/mscs/BonchiGKSZ22}: monogamy and acyclicity.

\begin{definition}[Paths]
    A \textbf{path} $P$ in a $\Sigma$-hypergraph is a list of hyperedges $[e_1,e_2,\cdots,e_n]$ such that for every consecutive pair of hyperedges $(e_k,e_{k+1})$, there is at least one target of $e_k$ equal to a source of $e_{k+1}$.

    A \textbf{cycle} $C$ in a $\Sigma$-hypergraph is a path such that at least one source of $e_1$ is a target of $e_n$.
\end{definition}

\begin{definition}[Monogamous acyclicity]
A $\Sigma$-hypergraph is \textbf{monogamous acyclic} (ma-hypergraph) if
\begin{enumerate}
    \item it contains no cycles (acyclicity) ;
    \item every node has at most in- and out- degree 1 (monogamy).
\end{enumerate}

Here in- (out-) degree of a node $v$ in a $\Sigma$-hypergraph $H$ is the number of pairs $(e, i)$ where $e$ is a hyperedge of $H$ with $v$ as its $i$-th target (source). We call input nodes those with in-degree 0, denoted by $in(H)$. Similarly, output nodes have out-degree 0 and are denoted by $out(H)$.

A $\Sigma$-hypergraph with interface $n \xrightarrow{f} H \xleftarrow{g} m$ is \textbf{monogamous acyclic}, or \textbf{ma-cospan}, if $H$ is an ma-hypergraph, $f$ is mono and its image is $in(H)$, and $g$ is mono and its image is $out(H)$.
\end{definition}

\subsection{Convex DPOI Rewriting}

Rewriting on string diagrams can be modelled categorically \cite{DBLP:journals/jacm/BonchiGKSZ22,DBLP:journals/mscs/BonchiGKSZ22}, by adapting DPO rewriting \cite{ehrigDPO1973}. We first recall DPO rewriting in $\Hyp_\Sigma$.

A rewrite rule is a span $\catspan L K R$ in $\Hyp_{\Sigma}$. A rewrite system $\mathcal{R}$ is a finite set of rewrite rules. We say that a $\Sigma$-hypergraph $G$ rewrites into a $\Sigma$-hypergraph $H$ if there exists a rule $L \xleftarrow{} K \to R$, a morphism $L \xrightarrow{m} G$ (called \emph{match}) and an object $C \in \Hyp_{\Sigma}$ such that the following two squares are pushouts: 

\vspace{-0.3cm}
\[\begin{tikzcd}[]
	{L} && {K} && {R} \\
	{G} && {C} && {H}
	\arrow["m"', from=1-1, to=2-1]
	\arrow[""{name=0, anchor=center, inner sep=0}, "f"', from=1-3, to=1-1]
	\arrow[""{name=1, anchor=center, inner sep=0}, "g", from=1-3, to=1-5]
	\arrow[from=1-3, to=2-3]
	\arrow[from=1-5, to=2-5]
	\arrow[from=2-3, to=2-1]
	\arrow[from=2-3, to=2-5]
	\arrow["\lrcorner"{anchor=center, pos=0.125, rotate=90}, draw=none, from=2-1, to=0]
	\arrow["\lrcorner"{anchor=center, pos=0.125, rotate=180}, draw=none, from=2-5, to=1]
\end{tikzcd}\]
\vspace{-0.3cm}

The above rewrite works as follows.
Computing the pushout complement removes the image of $L$ (the left-hand side of the rewrite rule) in $G$ while keeping the image of $K$ intact. 
By computing the pushout of $\catspan C K R [][g]$, we glue $R$ and $C$ along the image of $K$, thus replacing the image of $L$ in $G$ with $R$. 
More intuitively, what this procedure does is to take away the part that corresponds to the matching of the left-hand side, $L$, of a rewrite rule and replace it by its right-hand side, $R$. 

In this work we focus on \emph{left-connected} rewrite rules.
\begin{definition}[Strong connectivity]
An ma-hypergraph $G$ is \textbf{strongly connected} if for every input $x \in in(G)$ and output $y \in out(G)$ there exists a path from $x$ to $y$ in $G$. 

A \textbf{left-connected rewrite rule} is a span $L \xleftarrow{[i_L,o_L]} I+O \xrightarrow{[i_R,o_R]} R$ such that $I \xrightarrow{i_L} L \xleftarrow{o_L} O$ and $I \xrightarrow{i_R} R \xleftarrow{o_R} O$ are ma-cospans, $[i_L,o_L]$ is mono (we say that the rule is left-linear) and $L$ is strongly connected.

A \textbf{left-connected rewriting system} is a set of left-connected rewrite rules. 

\end{definition}

The first adaptation of DPO rewriting for string diagrams is to accommodate interfaces. This yields DPOI rewriting \cite{DBLP:journals/jacm/BonchiGKSZ22}.
%
Given two hypergraphs with interfaces, $G \xleftarrow{} J$ and $H \xleftarrow{} J$, we say that $G$ rewrites into $H$ if there exists a rewrite rule $\catspan L K R [f][g]$, a match $L \xrightarrow{m} G$ and a hypergraph with interface $C \xleftarrow{} J$, such that the squares below are pushouts and the whole diagram commutes:

\vspace{-0.3cm}
\[\begin{tikzcd}[sep = scriptsize]
	{L} && {K} && {R} \\
	{G} && {C} && {H} \\
	&& {J}
	\arrow["m"', from=1-1, to=2-1]
	\arrow[""{name=0, anchor=center, inner sep=0}, "f"', from=1-3, to=1-1]
	\arrow[""{name=1, anchor=center, inner sep=0}, "g", from=1-3, to=1-5]
	\arrow[from=1-3, to=2-3]
	\arrow[from=1-5, to=2-5]
	\arrow[from=2-3, to=2-1]
	\arrow[from=2-3, to=2-5]
	\arrow[from=3-3, to=2-1]
	\arrow[from=3-3, to=2-3]
	\arrow[from=3-3, to=2-5]
	\arrow["\lrcorner"{anchor=center, pos=0.125, rotate=90}, draw=none, from=2-1, to=0]
	\arrow["\lrcorner"{anchor=center, pos=0.125, rotate=180}, draw=none, from=2-5, to=1]
\end{tikzcd}\]
\vspace{-0.3cm}

The second adaptation of DPO rewriting is to impose convexity on matches. This is necessary to deal with the absence of a Frobenius structure \cite{DBLP:journals/mscs/BonchiGKSZ22}.

\begin{definition}[Convex matches]\label{dfn:convex_matches}
A $\Sigma$-hypergraph morphism $m : L \to G$ is a \textbf{convex match} if it is mono and its image $m(L)$ is convex, i.e. for any nodes $v, v'$ in $m(L)$ and any path $p$ from $v$ to $v'$ in $G$, every hyperedge in $p$ is also in $m(L)$.
\end{definition}

We recall the definition of boundary complement from \cite[Definition 30]{DBLP:journals/mscs/BonchiGKSZ22}.

\begin{definition}[Boundary complement]
	\label{boundary_complement_definition}
Let $I_1 \overset{i_1}{\to} G_1 \overset{o_1}{\leftarrow} O_1$ and $I_2 \overset{i_2}{\to}  G_2 \overset{o_1}{\leftarrow} O_2$ be two ma-cospans and $m : G_1 \to G_2$ a monomorphism, a pushout complement as depicted in $(\dagger)$ below is a boundary complement if $[i_1^\bot,o_1^\bot]$ is mono and there exist $i_1^\bot : I_2 \to G_1^\bot$ and $o_1^\bot : O_2 \to G_1^\bot$ making the triangle below commute and such that $O_1 + I_2 \overset{[o_1^\bot,i_2^\bot]}{\to} G_1^\bot \overset{[i_1^\bot,o_2^\bot]}{\leftarrow} I_1 + O_2$ is a ma-cospan.

\[\begin{tikzcd}[sep=small]
	{G_1} && {I_1+O_1} \\
	& {(\dagger)} \\
	{G_2} && {G_1^\bot} \\
	\\
	&& {I_2+O_2}
	\arrow["m"', from=1-1, to=3-1]
	\arrow["{[i_1,o_1]}"', from=1-3, to=1-1]
	\arrow["{[i_1^\bot,o_1^\bot]}", from=1-3, to=3-3]
	\arrow["g"{description}, from=3-3, to=3-1]
	\arrow["{[i_2,o_2]}", from=5-3, to=3-1]
	\arrow["{[i_2^\bot,o_2^\bot]}"', dashed, from=5-3, to=3-3]
\end{tikzcd}\]

\end{definition}

\begin{definition}[Convex rewriting]
Given a left-connected rewrite system $\mathcal{R}$, we say that an ma-cospan $n \xrightarrow{i_G} G \xleftarrow{o_G} m$ \textbf{rewrites convexly} into $n \xrightarrow{i_H} H \xleftarrow{o_H} m$ if there is a convex match $m' : L \to G$, a rewrite rule $L \xleftarrow{[i_L,o_L]} I+O \xrightarrow{[i_R,o_R]} R$ in $\mathcal{R}$ and a $\Sigma$-hypergraph $C$ such that the following diagram commutes, the left square is a boundary complement and the right square is a pushout:

\begin{equation*} 
\begin{tikzcd}[sep = scriptsize, ampersand replacement=\&,cramped]
	{L} \&\& {I+O} \&\& {R} \\
	{G} \&\& {C} \&\& {H} \\
	\&\& {n+m}
	\arrow["m'"', from=1-1, to=2-1]
	\arrow[""{name=0, anchor=center, inner sep=0}, "{[i_L,o_L]}"', from=1-3, to=1-1]
	\arrow[""{name=1, anchor=center, inner sep=0}, "{[i_R,o_R]}", from=1-3, to=1-5]
	\arrow[from=1-3, to=2-3]
	\arrow[from=1-5, to=2-5]
	\arrow[from=2-3, to=2-1]
	\arrow[from=2-3, to=2-5]
	\arrow["{[i_G,o_G]}", from=3-3, to=2-1]
	\arrow[from=3-3, to=2-3]
	\arrow["{[i_H,o_H]}"', from=3-3, to=2-5]
	\arrow["\lrcorner"{anchor=center, pos=0.125, rotate=90}, draw=none, from=2-1, to=0]
	\arrow["\lrcorner"{anchor=center, pos=0.125, rotate=180}, draw=none, from=2-5, to=1]
\end{tikzcd}
\end{equation*} 

We write $n \xrightarrow{i_G} G \xleftarrow{o_G} m \Rrightarrow_\mathcal{R} n \xrightarrow{i_H} H \xleftarrow{o_H} m$ and call it a \textbf{derivation}.
\end{definition}

Thanks to left-connectedness, a derivation can be uniquely determined by an ma-cospan $n \rightarrow G \leftarrow m$, a mono match $L \xrightarrow{m'} G$ and a rewrite rule $L \leftarrow I+O \rightarrow R$.
\begin{proposition}\label{prop:leftConnected_prop}
In left-connected rewrite systems, the boundary complement condition is always met.
In left-connected rewrite systems, a mono match is always convex.
\end{proposition}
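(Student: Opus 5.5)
We prove the two claims separately, starting with convexity. Let $L \leftarrow I+O \rightarrow R$ be a left-connected rule and $m' : L \to G$ a mono into an ma-hypergraph $G$. Take nodes $v, v'$ in $m'(L)$ and a path $p = [e_1,\dots,e_k]$ in $G$ from $v$ to $v'$. We want every $e_j$ to lie in $m'(L)$.

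The first step reduces to boundary nodes. If $v$ is not an input of $L$, then by monogamy of $L$ it has an incoming hyperedge in $L$. If $v$ is not an output of $L$, it has an outgoing hyperedge in $L$, and since $m'$ is mono and $G$ is monogamous, that hyperedge is the unique outgoing hyperedge of $v$ in $G$. So the first edge $e_1$ of $p$ already lies in $m'(L)$, and we may shorten $p$ to the first point where it leaves $m'(L)$. Symmetrically at the end. Hence it suffices to rule out a path $p$ whose hyperedges all lie outside $m'(L)$, leaving from an output node $y$ of $m'(L)$ and re-entering at an input node $x$ of $m'(L)$. Here we use that a node of $L$ with an outgoing $L$-edge cannot have a different outgoing $G$-edge.

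The second step uses strong connectivity. By strong connectivity of $L$ there is a path in $L$ from $x$ to $y$. Its image under $m'$, concatenated with $p$ from $y$ back to $x$, gives a cycle in $G$, contradicting acyclicity. Degenerate cases also need a check: if $L$ has no inputs or no outputs, no such path $p$ can arise, and an isolated node that is both input and output is handled the same way, since the empty $L$-path and $p$ still form a cycle. The main obstacle is making this reduction to "exit at an output, re-enter at an input" precise under monogamy. This requires a careful case analysis on whether $v$, $v'$ are inputs, outputs, or internal nodes of $L$.

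For the boundary complement claim, we construct the complement explicitly. Let $C$ be $G$ with the hyperedges of $m'(L)$ and the internal nodes of $m'(L)$ removed, i.e.\ all nodes of $m'(L)$ that are neither in $m'(i_L(I))$ nor in $m'(o_L(O))$. This is well defined because, by monogamy, every hyperedge of $G$ touching an internal node of $m'(L)$ is in $m'(L)$. The pushout of $C \leftarrow I+O \rightarrow L$ recovers $G$ since $[i_L,o_L]$ is mono. Define $[i^\bot,o^\bot]$ by the inclusions of $I$, $O$, $n$, $m$, which factor through $C$ because interface nodes of $G$ are inputs or outputs and not internal to $L$. It remains to show that $O + n \to C \leftarrow I + m$ is an ma-cospan. $C$ is monogamous and acyclic as a subgraph of $G$, and we compute in- and out-nodes: nodes of $O$ lose their incoming edge and become inputs of $C$, and nodes of $I$ lose their outgoing edge and become outputs of $C$. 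Joint monicity of $[i^\bot,o^\bot]$ follows from disjointness of these sets. An image of $O$ coinciding with an image of $I$ would give an isolated node that is both input and output of $L$; strong connectivity handles this via the trivial path.
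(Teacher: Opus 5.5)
Your convexity argument is correct. By monogamy of $G$, the first hyperedge of $p$ outside $m'(L)$ leaves from the image of an output of $L$, and the first return to $m'(L)$ enters at the image of an input of $L$. A path in $L$ given by strong connectivity then closes this segment into a cycle in $G$. Your construction of $C$ for the boundary complement is also the right one.

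The gap is in your last step, where you exclude a node that is both an input and an output of $L$ by ``strong connectivity via the trivial path''. That is backwards. If the empty path counts as a path from $x$ to $x$, such an isolated node satisfies strong connectivity rather than contradicting it; the one-node hypergraph with $I=O=1$ is an example. If paths must be nonempty, strong connectivity does exclude it, but because there is no path from $x$ to itself, not via a trivial one. The hypothesis that actually does the work is left-linearity. $[i_L,o_L]$ mono says exactly that $i_L(I)\cap o_L(O)=\emptyset$, i.e.\ $in(L)\cap out(L)=\emptyset$. Monicity of $[i_1^\bot,o_1^\bot]$ then follows at once from $[i_1^\bot,o_1^\bot];g=[i_L,o_L];m'$, since both factors on the right are mono.

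This fact is not a side remark: the whole ma-cospan check rests on it, and you use it silently in three places.
\begin{itemize}
\item ``Nodes of $O$ lose their incoming edge'' needs every output of $L$ to have an incoming $L$-edge, i.e.\ not to be an input of $L$; dually for nodes of $I$.
\item The disjointness $m'(o_L(O))\cap i_G(n)=\emptyset$ and $m'(i_L(I))\cap o_G(m)=\emptyset$ is needed for $O+n\to C$ and $I+m\to C$ to be mono. Your phrase ``disjointness of these sets'' leaves this unargued. It follows from the same fact: an output of $L$ has an incoming $L$-edge, so its image is not an input of $G$, and dually.
\item Without it the construction genuinely fails. For $x\in in(L)\cap out(L)$, no hyperedge of $m'(L)$ touches $m'(x)$. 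So $m'(x)$, being in the image of $O$, can lie in $in(C)$ only if it is an input of $G$. But then it is also hit by $n$, and $O+n\to C$ is not mono.
\end{itemize}

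To repair the proof, replace the appeal to strong connectivity by left-linearity. Also state the reverse inclusions $in(C)\subseteq m'(o_L(O))\cup i_G(n)$ and $out(C)\subseteq m'(i_L(I))\cup o_G(m)$. These hold because a target of a removed hyperedge is the image of a node of $L$ with an incoming $L$-edge, and those surviving in $C$ are images of outputs; dually for sources. With these changes the boundary-complement half goes through using only monogamy of $G$, monicity of $m'$ and left-linearity, and strong connectivity is needed only for convexity.
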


\begin{proposition}\label{prop:pushoutCompl_leftConnected}
In left-connected rewrite systems, for all rewrite rules and for all mono matchings, the pushout complement $C$ always uniquely exists.
\end{proposition}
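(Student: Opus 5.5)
Proposition~\ref{prop:pushoutCompl_leftConnected} asks for existence and uniqueness (up to isomorphism) of the pushout complement of $I+O \xrightarrow{[i_L,o_L]} L \xrightarrow{m'} G$. I would argue componentwise in $\Hyp_\Sigma$ and use the classical DPO gluing conditions. Since $\Hyp_\Sigma$ is a presheaf category, pushouts are computed pointwise on nodes and hyperedges. Moreover, $[i_L,o_L]$ is mono because the rule is left-linear. For a mono $K \to L$, it is standard (adhesivity of presheaf toposes) that a pushout complement, when it exists, is unique up to isomorphism. So uniqueness comes for free, and the work lies in existence.

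For existence I would construct $C$ explicitly. Its hyperedges are $\HEdges(G) \setminus m'(\HEdges(L))$. Its nodes are $\Nodes(G) \setminus m'(\Nodes(L) \setminus [i_L,o_L](I+O))$, so we keep exactly the images of interface nodes. The maps $s,t,l$ are restricted from $G$, and $I+O \to C$ is $[i_L,o_L];m'$. Checking that this is a pushout reduces to the gluing conditions.

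The identification condition holds because $m'$ is mono. The remaining point, and the main one, is the dangling condition: no hyperedge of $G$ outside $m'(L)$ may be incident to a node of $m'(L)$ that is not in the image of $I+O$. Here I would use that $I \to L \leftarrow O$ is an ma-cospan. Every non-interface node $v$ of $L$ has in-degree $1$ and out-degree $1$ in $L$. Since $G$ is monogamous, $v$ has in- and out-degree at most $1$ in $G$. These degrees are already used by hyperedges of $L$, and $m'$ is mono, so the images of those hyperedges are distinct. Hence no other hyperedge of $G$ can have $m'(v)$ as a source or a target.

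It remains to confirm that $C$ is well formed. Its hyperedges only touch retained nodes: by the dangling argument, a hyperedge outside $m'(L)$ has all its endpoints either outside $m'(L)$ or in the image of $I+O$. The pushout then holds pointwise, since $G$ is the disjoint union of $C$ and $m'(L)$ glued along $I+O$, both on nodes and on hyperedges.

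I expect the dangling check to be the only nontrivial step. Strong connectivity of $L$ is not needed for this step; it matters only for the boundary complement part, Proposition~\ref{prop:leftConnected_prop}. Uniqueness can also be checked directly: any complement $C'$ must contain exactly the hyperedges and nodes of $G$ not in $m'(L) \setminus m'(I+O)$, because a pushout along a mono is a disjoint-union-over-the-interface decomposition.
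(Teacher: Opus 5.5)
Your proposal is correct and follows essentially the paper's route. The paper gets existence by citing the DPO gluing condition in Ehrig et al., and uniqueness by citing uniqueness of pushout complements along the mono left leg $[i_L,o_L]$ (Bonchi et al., Prop.~3.18). You additionally spell out the dangling-condition check, using monogamy of $G$ and the ma-cospan structure of $L$, which the paper leaves implicit in its citation; you are right that this needs $G$ to be the apex of an ma-cospan, as the surrounding context assumes.
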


\begin{proof}
The existence follows from \cite[pp.44,45]{ehrig_fundamentals_2006} and the uniqueness of the pushout complement follows from \cite[Prop. 3.18]{DBLP:journals/jacm/BonchiGKSZ22}.

\end{proof}

\subsection{Critical Pairs} \label{sec:critical-pairs}

We finally recall the definition of critical pairs \cite{DBLP:journals/mscs/BonchiGKSZ22a}.

\begin{definition}[Critical pairs] \label{dfn:pcPair}
    Let $\mathcal{R}$ be a left-connected rewrite system, and $\catspan {L_1} {K_1} {R_1}$ and $\catspan {L_2} {K_2} {R_2}$ be its two rewrite rules. Consider two derivations with common source $n \to S \leftarrow m$:

\[\begin{tikzcd}[sep = scriptsize, ampersand replacement=\&,cramped]
	{R_1} \& {K_1} \& {L_1} \&\& {L_2} \& {K_2} \& {R_2} \\
	{H_1} \& {C_1} \&\& S \&\& {C_2} \& {H_2} \\
	\&\&\& {n+m}
	\arrow[from=1-1, to=2-1]
	\arrow[from=1-2, to=1-1]
	\arrow[from=1-2, to=1-3]
	\arrow[from=1-2, to=2-2]
	\arrow["{m_1}", from=1-3, to=2-4]
	\arrow["{m_2}"', from=1-5, to=2-4]
	\arrow[from=1-6, to=1-5]
	\arrow[from=1-6, to=1-7]
	\arrow[from=1-6, to=2-6]
	\arrow[from=1-7, to=2-7]
	\arrow["\lrcorner"{anchor=center, pos=0.125, rotate=90}, draw=none, from=2-1, to=1-2]
	\arrow[from=2-2, to=2-1]
	\arrow[from=2-2, to=2-4]
	\arrow["\lrcorner"{anchor=center, pos=0.125, rotate=180}, draw=none, from=2-4, to=1-2]
	\arrow["\lrcorner"{anchor=center, pos=0.125, rotate=90}, draw=none, from=2-4, to=1-6]
	\arrow[from=2-6, to=2-4]
	\arrow[from=2-6, to=2-7]
	\arrow["\lrcorner"{anchor=center, pos=0.125, rotate=180}, draw=none, from=2-7, to=1-6]
	\arrow[from=3-4, to=2-1]
	\arrow[from=3-4, to=2-2]
	\arrow[from=3-4, to=2-4]
	\arrow[from=3-4, to=2-6]
	\arrow[from=3-4, to=2-7]
\end{tikzcd}\]

 \begin{enumerate}
  \item We say that $(n \to H_1 \leftarrow m) \Lleftarrow_\mathcal{R} (n \to S \leftarrow m) \Rrightarrow_\mathcal{R} (n \to H_2 \leftarrow m)$ is a \textbf{pre-critical pair} if $[m_1, m_2] : L_1 + L_2 \to S$ is epi.
  \item The pre-critical pair is \textbf{joinable} if there exists $W$ such that $(n \to H_1 \leftarrow m) \rightderiv (n \to W \leftarrow m) \leftderiv (n \to H_2 \leftarrow m)$, where $\rightderiv$ means a finite number (possibly zero) of rewrites.
  \item The pre-critical pair is a \textbf{parallel pair} if there exist $g_1 : L_1 \to C_2$ and $g_2 : L_2 \to C_1$ making the diagram below commute:
	%
\[\begin{tikzcd}[sep = scriptsize, ampersand replacement=\&,cramped]
	{K_1} \& {L_1} \&\& {L_2} \& {K_2} \\
	{C_1} \&\& S \&\& {C_2} \\
	\&\& {n+m}
	\arrow[from=1-1, to=1-2]
	\arrow[from=1-1, to=2-1]
	\arrow[from=1-2, to=2-3]
	\arrow["{g_1}"{description, pos=0.1}, from=1-2, to=2-5]
	\arrow["{g_2}"{description, pos=0.1}, from=1-4, to=2-1]
	\arrow[from=1-4, to=2-3]
	\arrow[from=1-5, to=1-4]
	\arrow[from=1-5, to=2-5]
	\arrow[from=2-1, to=2-3]
	\arrow["\lrcorner"{anchor=center, pos=0.125, rotate=180}, draw=none, from=2-3, to=1-1]
	\arrow["\lrcorner"{anchor=center, pos=0.125, rotate=90}, draw=none, from=2-3, to=1-5]
	\arrow[from=2-5, to=2-3]
	\arrow[from=3-3, to=2-1]
	\arrow[from=3-3, to=2-3]
	\arrow[from=3-3, to=2-5]
\end{tikzcd}\]
  \item The pre-critical pair is a \emph{critical pair} if it is not parallel.
 \end{enumerate}
\end{definition}

Thanks to Prop.~\ref{prop:leftConnected_prop} and Prop.~\ref{prop:pushoutCompl_leftConnected}, for left-connected rewrite systems, the pre-critical pair $(n \to H_1 \leftarrow m) \Lleftarrow_\mathcal{R} (n \to S \leftarrow m) \Rrightarrow_\mathcal{R} (n \to H_2 \leftarrow m)$ can uniquely be determined by a cospan (dubbed \emph{cp-cospan})
$L_1 + L_2 \twoheadrightarrow S \leftarrow I+O$ such that:
(i) $L_1 + L_2 \twoheadrightarrow S$ is an epimorphism given by the coupling of two mono matches from $L_1$ and $L_2$ to $S$; and (ii) $I \rightarrow S \leftarrow O$ is a ma-cospan.

\section{A Critical Pair Enumeration Algorithm} \label{sec:algo}

Our goal is to enumerate automatically all critical pairs for a given left-connected rewrite system. To do so, we have to enumerate all relevant epimorphisms $L_i + L_j \twoheadrightarrow S$ where $L_i$ and $L_j$ are left hand sides of rewrite rules. We will glue different nodes and hyperedges of $L_i + L_j$ to enumerate these epimorphisms.

\begin{definition}
Let $L_1$ and $L_2$ be two $\Sigma$-hypergraphs.
A \textbf{gluing scheme} is given by a $\Sigma$-hypergraph $G$ and two $\Sigma$-hypergraph morphisms $g_1 : G \to L_1 + L_2$ and $g_2 : G \to L_1 + L_2$. The \textbf{gluing} is the coequalizer of $g_1$ and $g_2$.
For two nodes (or hyperedges) $x$ and $x'$ of $L_1 + L_2$, they are \emph{glued} if there exists a node (or a hyperedge) $y$ of $G$ such that $g_1(y) = x$ and $g_2(y) = x'$.
\vspace{-0.2cm}
\[\begin{tikzcd}[ampersand replacement=\&,cramped]
	G \&\& {L_1+L_2} \&\& {\mathtt{coeq}_{G}(g_1,g_2)}
	\arrow["{g_1}", curve={height=-6pt}, from=1-1, to=1-3]
	\arrow["{g_2}"', curve={height=6pt}, from=1-1, to=1-3]
	\arrow["\epsilon"', dotted, from=1-3, to=1-5]
\end{tikzcd}\]
\vspace{-0.4cm}
\end{definition}

Each gluing scheme $(g_1 \colon G \to L_1+L_2, g_2 \colon G \to L_1+L_2)$ induces a cospan:
 
\[ L_1 + L_2 \twoheadrightarrow \mathtt{coeq}_{G}(g_1,g_2) \xleftarrow{[\subseteq,\subseteq]} in(\mathtt{coeq}_{G}(g_1,g_2)) + out(\mathtt{coeq}_{G}(g_1,g_2)) \]
 We call the coequaliser $\mathtt{coeq}_{G}(g_1,g_2)$ \emph{candidate source}.
As observed in Sec.~\ref{sec:critical-pairs}, this cospan uniquely determines a pre-critical pair if it is a cp-cospan.
 Now the question is: what are necessary conditions on the gluing scheme so that the cospan becomes a cp-cospan and hence induces a (pre-)critical pair?

We first observe that a gluing scheme should not glue nodes nor hyperedges within $L_1$ and $L_2$.

\begin{proposition}
\label{prop1}

If two nodes from $L_1$ (resp. $L_2$) are glued, the gluing scheme does not yield a pre-critical pair. 
If two hyperedges from $L_1$ (resp. $L_2$) are glued, the gluing scheme does not yield a pre-critical pair. 
\end{proposition}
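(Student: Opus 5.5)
The argument turns on the requirement that $m_1$ and $m_2$ be mono. By the discussion at the end of Sec.~\ref{sec:critical-pairs}, the gluing scheme yields a pre-critical pair only if the induced cospan is a cp-cospan. In particular, the epimorphism $\epsilon \colon L_1 + L_2 \twoheadrightarrow \mathtt{coeq}_{G}(g_1,g_2)$ must be the copairing $[m_1,m_2]$ of two \emph{mono} matches $m_1 \colon L_1 \to S$ and $m_2 \colon L_2 \to S$. Since $\epsilon$ is already determined by the coequaliser, this forces $m_k = \iota_k ; \epsilon$ for $k=1,2$.

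So it suffices to show the following: if the scheme glues two distinct nodes (resp.\ hyperedges) $x \neq x'$ of $L_1$, then $\iota_1 ; \epsilon$ fails to be injective on nodes (resp.\ hyperedges), and hence is not mono in $\Hyp_\Sigma$. The case of $L_2$ is symmetric.

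\begin{enumerate}
\item Fix the meaning of ``glued from $L_1$''. This means there is some $y$ in $G$ with $g_1(y)=\iota_1(x)$ and $g_2(y)=\iota_1(x')$, where $x \neq x'$; the case $x = x'$ is a trivial identification and is excluded.
\item Since $\epsilon$ coequalises $g_1$ and $g_2$, we get $\epsilon(\iota_1(x)) = \epsilon(g_1(y)) = \epsilon(g_2(y)) = \epsilon(\iota_1(x'))$. So $\iota_1;\epsilon$ identifies $x$ and $x'$.
\item Recall that monomorphisms in $\Hyp_\Sigma$ are exactly the componentwise injective morphisms, since $\Hyp_\Sigma$ is a presheaf category. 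Concretely, test against the one-node discrete hypergraph, or against the hypergraph consisting of a single hyperedge of the same label together with its endpoints. Two morphisms from this test object picking out $x$ and $x'$ are distinct but become equal after composing with $m_1$. Hence $m_1$ is not mono.
\item Conclude that $m_1$ is not a mono match, so condition (i) of a cp-cospan fails. By Prop.~\ref{prop:leftConnected_prop} and Prop.~\ref{prop:pushoutCompl_leftConnected}, a derivation requires a mono match, so no pre-critical pair arises from this scheme.
\end{enumerate}

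The main subtlety is justifying that the matches of the pre-critical pair must be $\iota_k;\epsilon$, rather than some other pair of maps into $S$. One might worry that a different epimorphism onto the same $S$ could be used. However, the cospan induced by the gluing scheme fixes its left leg to be $\epsilon$. So ``the gluing scheme yields a pre-critical pair'' means precisely that this specific cospan is a cp-cospan, and the argument above applies to it. A secondary point is the hyperedge case in step 3: the single-hyperedge test object must carry the same label and arity for both $x$ and $x'$. This holds because $\epsilon$ preserves labels and $\epsilon(x)=\epsilon(x')$.
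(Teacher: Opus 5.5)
Your proposal is correct and follows the same argument as the paper. Gluing two distinct nodes or hyperedges of $L_1$ forces $\iota_1;\epsilon$ to identify them, so that match is not mono and cannot be a valid (convex) match. You also spell out details the paper leaves implicit: why the matches must be $\iota_k;\epsilon$, and why non-injectivity on nodes or hyperedges means non-mono for hypergraph morphisms.
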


\begin{proof}
	If two nodes from $L_1$ are glued in $\mathtt{coeq}_{G}(g_1,g_2)$, then $\iota_1 ; \epsilon$ is not mono and is thus not a valid convex matching.
	The same proof works for hyperedges.
\end{proof}

Secondly we observe that nodes, separately from $L_1$ and $L_2$, should be glued in a specific way.

\begin{proposition} \label{prop:gluingNodes}
If a node $A$ from $L_1$ and a node $B$ from $L_2$ are glued and the gluing scheme yields a pre-critical pair, then either 
\begin{itemize}
\item $A$ and $B$ are the $k$-th source of a glued hyperedge ;
\item $A$ and $B$ are the $i$-th target of a glued hyperedge ;
\item $A$ is an output of $L_1$, $B$ is an input of $L_2$.
\item $A$ is an input of $L_1$, $B$ is an output of $L_2$;
\end{itemize}
\end{proposition}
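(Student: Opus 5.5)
We argue by case analysis on the degrees of $A$ in $L_1$ and $B$ in $L_2$, using that the candidate source $S = \mathtt{coeq}_G(g_1,g_2)$ must be an ma-hypergraph, so every node of $S$ has in-degree and out-degree at most $1$. Write $\epsilon \colon L_1 + L_2 \to S$ and let $v = \epsilon(A) = \epsilon(B)$. By Prop.~\ref{prop1}, $\iota_1;\epsilon$ and $\iota_2;\epsilon$ are mono, so distinct hyperedges of $L_1$ have distinct images, and likewise for $L_2$. Since the $L_i$ are ma-hypergraphs, each of $A$ and $B$ is either an input or the target of exactly one incidence $(e,i)$, and either an output or the source of exactly one incidence $(e',k)$.

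First we treat the incoming side. Suppose $A$ is the $i$-th target of some $e_1 \in L_1$ and $B$ is the $j$-th target of some $e_2 \in L_2$. Then $v$ is the $i$-th target of $\epsilon(e_1)$ and the $j$-th target of $\epsilon(e_2)$. Monogamy of $S$ forces $(\epsilon(e_1),i) = (\epsilon(e_2),j)$, so $e_1$ and $e_2$ are glued and $i=j$. This gives the second bullet. Symmetrically, if both $A$ and $B$ are sources of hyperedges, we get the first bullet.

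It remains to handle the case where, on each side, at least one of $A, B$ has no incident hyperedge. Concretely, we may assume that one of $A,B$ is an input and one of $A,B$ is an output. If $A$ is an output of $L_1$ and $B$ is an input of $L_2$, this is the third bullet. If $A$ is an input of $L_1$ and $B$ is an output of $L_2$, this is the fourth bullet.

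The remaining configurations are those where the same node carries both roles, e.g.\ $A$ is both an input and an output of $L_1$, i.e.\ an isolated node. We need to exclude, or absorb, the case where $A$ is isolated while $B$ has, say, an incoming hyperedge but is not an output. In that situation $A$ being an output already places us in the third bullet, since $B$ is then an input or we are in a mixed case. We therefore enumerate the four combinations (input/target for each of $A,B$) times (output/source for each of $A,B$). Every combination not covered by "both targets" or "both sources" has one of $A,B$ as an input and one of them as an output. If these are different nodes we obtain bullet 3 or 4 directly. If they are the same node, say $A$ is isolated, then $A$ is in particular both an output and an input, so whatever $B$ is, either $B$ is an input (bullet 3 with $A$ as output), or $B$ is an output (bullet 4 with $A$ as input), or $B$ is the target of one hyperedge and the source of another. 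In the last subcase $B$ is neither an input nor an output, and none of the bullets applies directly.

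This last subcase is the main obstacle: an isolated node of $L_1$ glued onto an internal node of $L_2$. I would rule it out via strong connectivity of $L_1$. An isolated node $A$ is both an input and an output of $L_1$, so strong connectivity requires a path from $A$ to every output and from every input to $A$. This is possible only if $L_1$ consists of the single node $A$, which contains no hyperedges, so the case is degenerate. If such discrete left-hand sides are allowed, the proposition still holds literally, because $A$ is an output of $L_1$; the difficulty is that $B$ is not an input of $L_2$. I would try to exclude this either by noting that a node-only $L_1$ is trivially not a rule of interest, or by invoking the boundary complement (Prop.~\ref{prop:leftConnected_prop}) to argue that the two matches then form a parallel pair. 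Either way, the gluing would not give a genuine critical pair.
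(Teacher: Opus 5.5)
\paragraph{Verdict: genuine gap in the last case.}
Most of your case analysis is correct. Monogamy of $S$ counts pairs $(e,i)$, so when both $A$ and $B$ have incoming (resp.\ outgoing) hyperedges it forces $\epsilon(e_1)=\epsilon(e_2)$ and equal positions; this gives the first two bullets. When one of $A,B$ is an input and the other one is an output, you land in the third or fourth bullet.

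The gap is the subcase you flag but do not close: $A$ isolated in $L_1$ (so $A\in in(L_1)\cap out(L_1)$) glued to a node $B$ of $L_2$ of in- and out-degree $1$. None of your proposed exits works.
\begin{itemize}
\item The claim that the proposition ``still holds literally because $A$ is an output of $L_1$'' is false. The third bullet also requires $B$ to be an input of $L_2$, and the fourth requires $B$ to be an output.
\item Arguing that such a pair would be parallel, or that a discrete $L_1$ is ``not of interest'', does not suffice. The statement concerns every pre-critical pair, not only critical ones, so you must show this configuration cannot occur at all.
\item Strong connectivity excludes an isolated $A$ only if a path from $A$ to $A$ must contain a hyperedge. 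The definition leaves this open, and if the empty path counts, strong connectivity does not rule it out.
\end{itemize}

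\paragraph{Missing ingredient: left-linearity.}
Left-linearity is part of the definition of a left-connected rule: $[i_L,o_L]\colon I+O\to L$ is mono. Since $I\xrightarrow{i_L}L\xleftarrow{o_L}O$ is an ma-cospan, $i_L(I)=in(L)$ and $o_L(O)=out(L)$. Hence $in(L)\cap out(L)=\emptyset$ for $L=L_1,L_2$, so neither left-hand side has an isolated node.

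With this, your final case becomes exhaustive. Suppose $A,B$ are not both targets and not both sources. Then one of them is an input and one of them is an output, and these must be distinct nodes.
\begin{itemize}
\item If $A$ is an input, it is not an output, so $B$ is an output: this is the fourth bullet.
\item Otherwise $B$ is an input, hence not an output, so $A$ is an output: this is the third bullet.
\end{itemize}
Replacing your last paragraph with this observation completes the proof.
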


\begin{proposition} \label{prop:gluingNodesIO}
If a node $A$ from $L_1$ and a node $B$ from $L_2$ are glued and the gluing scheme yields a pre-critical pair, then:
\begin{itemize}
\item if $A$ is an output of $L_1$, $B$ is an input of $L_2$, then no input of $L_1$ is glued to an output of $L_2$;
\item if $A$ is an input of $L_1$ and $B$ is an output of $L_2$, then no output of $L_1$ is glued to an input of $L_2$.
\end{itemize}
\end{proposition}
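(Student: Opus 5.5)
We argue by contradiction using acyclicity of the candidate source $S = \mathtt{coeq}_{G}(g_1,g_2)$, which must hold when the gluing scheme yields a pre-critical pair, since then $I \to S \leftarrow O$ is an ma-cospan. By symmetry it suffices to treat the first bullet; the second follows by exchanging the roles of $L_1$ and $L_2$. So suppose an output $A$ of $L_1$ is glued to an input $B$ of $L_2$, and also some input $A'$ of $L_1$ is glued to some output $B'$ of $L_2$. We will build a cycle in $S$.

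The key tool is strong connectivity of the left-hand sides, which holds because the rules are left-connected. Since $L_1$ is strongly connected, there is a path $p_1$ in $L_1$ from the input $A'$ to the output $A$. Since $L_2$ is strongly connected, there is a path $p_2$ in $L_2$ from the input $B$ to the output $B'$. Both paths are nonempty lists of hyperedges: an input node has in-degree $0$ and an output node has out-degree $0$ in an ma-hypergraph, so if $A' = A$ the node would be isolated. An isolated node can be excluded by reading the path definition as requiring at least one hyperedge. Alternatively, an isolated node would be both an input and an output, and we handle that degenerate case separately or note it contradicts monogamy of the glued node in $S$.

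Next we push these paths forward along the mono matches $m_1 = \iota_1;\epsilon$ and $m_2 = \iota_2;\epsilon$, which are hypergraph morphisms, so they send paths to paths. In $S$, the image of $p_1$ is a path from $\epsilon(A')$ to $\epsilon(A) = \epsilon(B)$. The image of $p_2$ is a path from $\epsilon(B)$ to $\epsilon(B') = \epsilon(A')$. Concatenating the two gives a list of hyperedges in which each consecutive pair shares a target-to-source node. At the junction, the last hyperedge of $p_1$ has $A$ as a target and the first hyperedge of $p_2$ has $B$ as a source, and these are identified in $S$. Moreover, the last hyperedge (from $p_2$) has $\epsilon(B') = \epsilon(A')$ as a target, which is a source of the first hyperedge of $p_1$. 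This is a cycle in $S$, contradicting acyclicity.

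The main obstacle is making sure the concatenation genuinely meets the paper's definition of a cycle. In particular, we must check that paths from a node $x$ to a node $y$ start with a hyperedge having $x$ as a source and end with one having $y$ as a target. We must also dispose of the degenerate empty-path case. We would state this convention explicitly, invoking the in/out-degree characterisation of inputs and outputs in ma-hypergraphs, before running the argument above.
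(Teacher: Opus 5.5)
Your argument is the paper's: strong connectivity gives paths $A' \leadsto A$ in $L_1$ and $B \leadsto B'$ in $L_2$. Their images under $\iota_1;\epsilon$ and $\iota_2;\epsilon$ concatenate into a cycle in the candidate source, contradicting acyclicity.

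Your concern about empty paths is more care than the paper takes, but drop the monogamy alternative. If $A'=A$ and $B=B'$ are isolated, their glued image in $S$ is just an isolated node, so monogamy is not violated. The clean fix is left-linearity. Since $[i_L,o_L]$ is mono and its two components have images $in(L)$ and $out(L)$, we get $in(L)\cap out(L)=\emptyset$. Hence $A'\neq A$ and $B\neq B'$, and both paths are nonempty.
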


\begin{proof}
	Suppose that a node $A$ from $L_1$ and a node $B$ from $L_2$ are glued, the gluing scheme yields a pre-critical pair, $A$ is an output of $L_1$, and $B$ is an input of $L_2$.
	Suppose $X$ an input of $L_1$ is glued to $Y$ an output of $L_2$. There are paths $X \leadsto A$ and $B \leadsto Y$ by strong connectedness of $L_1$ and $L_2$. Then, we obtain a cycle $[X] \leadsto [A] = [B] \leadsto [Y] = [X]$ which contradicts the acyclicity property. \\
	The second point follows a similar argument.
\end{proof}

Finally, we can obtain a sufficient and necessary condition for a gluing scheme to induce a critical pair.

\begin{proposition}\label{prop:precritical_isParallel}
 A pre-critical pair
 \[ L_1 + L_2 \twoheadrightarrow \mathtt{coeq}_{G}(g_1,g_2) \xleftarrow{[\subseteq,\subseteq]} in(\mathtt{coeq}_{G}(g_1,g_2)) + out(\mathtt{coeq}_{G}(g_1,g_2))\] 
 yielded by a gluing scheme is parallel iff the following holds:
    \begin{enumerate}
        \item no hyperedges from $L_1$ and $L_2$ are glued, and
        \item if two nodes from $L_1$ and $L_2$ are glued, they are in interfaces of $L_1$ and $L_2$.
    \end{enumerate}
\end{proposition}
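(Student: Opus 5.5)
We prove the two directions separately, working in $\Hyp_\Sigma$ set-theoretically: $S = \mathtt{coeq}_{G}(g_1,g_2)$, with $m_1 = \iota_1;\epsilon$, $m_2 = \iota_2;\epsilon$ mono (Prop.~\ref{prop1}) and jointly epi. By left-linearity $K_i = I_i + O_i$ embeds in $L_i$ as its interface nodes. We use the standard description of the pushout complement $C_i$ (Prop.~\ref{prop:pushoutCompl_leftConnected}): $C_i$ is $S$ with the hyperedges of $m_i(L_i)$ removed, and with the nodes of $m_i(L_i)$ removed except those in $m_i(K_i)$. So $C_i \to S$ is the inclusion of this subhypergraph.

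\emph{($\Leftarrow$)} Assume conditions 1 and 2. We define $g_1 : L_1 \to C_2$ as the corestriction of $m_1$, and need $m_1(L_1) \subseteq C_2$. For a hyperedge $e$ of $L_1$, condition 1 and injectivity of $m_2$ give $m_1(e) \notin m_2(L_2)$, so $m_1(e)$ survives in $C_2$. For a node $v$ of $L_1$, either $m_1(v) \notin m_2(L_2)$, or $m_1(v) = m_2(w)$ for some $w$. Then $v$ and $w$ are identified in the coequaliser. Since only cross-gluings occur, they are glued through a zig-zag. We show such a chain passes only through interface nodes: by condition 2, any direct cross-gluing involves interface nodes, and an intermediate identification within $L_1$ would contradict monicity. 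Hence $w \in K_2$, so $m_1(v) \in C_2$. Because $C_2 \to S$ is mono, the corestriction is a hypergraph morphism (sources and targets of surviving hyperedges lie in $C_2$ since $C_2$ is a subhypergraph), and the triangles commute. $g_2$ is symmetric.

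\emph{($\Rightarrow$)} Assume $g_1, g_2$ exist. Since $C_2 \to S$ is mono and $g_1;(C_2\to S) = m_1$, the image $m_1(L_1)$ lies inside $C_2$. If a hyperedge $e_1$ of $L_1$ were glued with $e_2$ of $L_2$, then $m_1(e_1) = m_2(e_2)$ would be a hyperedge of $m_2(L_2)$, which is removed in $C_2$. This is a contradiction, giving condition 1. If nodes $A \in L_1$ and $B \in L_2$ are glued, then $m_1(A) = m_2(B)$ lies in $C_2$. It also lies in $m_2(L_2)$, so it must be in $m_2(K_2)$, and by monicity of $m_2$, $B$ is an interface node of $L_2$. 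Symmetrically, using $g_2$ and $C_1$, $A$ is an interface node of $L_1$. This gives condition 2.

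The main obstacle is making the description of $C_i$ rigorous and handling the step from ``glued'' to ``identified in the coequaliser''. The definition of gluing refers to direct pairs $(g_1(y), g_2(y))$, while the coequaliser takes the generated equivalence relation. We first show, using Prop.~\ref{prop1} (no identifications inside $L_1$ or $L_2$) and Prop.~\ref{prop:pushoutCompl_leftConnected}, that the generated relation on cross pairs coincides with direct gluing up to transitivity through $L_1$/$L_2$ singletons. Then we check that the pushout complement in the DPO sense is exactly the subhypergraph described above; this is the usual gluing-condition construction from \cite{ehrig_fundamentals_2006}.
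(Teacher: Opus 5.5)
Your proof is correct and follows essentially the same route as the paper. For $(\Rightarrow)$ you use that the only part of $m_2(L_2)$ surviving in the pushout complement $C_2$ is the discrete image $m_2(K_2)$, which the paper phrases as ``identified elements of $C_1$ and $L_1$ must be present in $K_1$''; for $(\Leftarrow)$ your corestriction of $m_1$ through $C_2$ is exactly the paper's map sending $v_1$ to the image of $v_2$ in $C_2$ and acting as the identity elsewhere, and your explicit subhypergraph description of $C_i$ together with the reduction of coequaliser identifications to direct cross-gluings (via monicity of $m_1,m_2$) make rigorous two points the paper's diagram chase leaves implicit.
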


\begin{proof}[Proof of $\Rightarrow$]
    Suppose we have a parallel pair
\[\begin{tikzcd}[ampersand replacement=\&,cramped]
	{K_1} \& {L_1} \& {L_1 + L_2} \& {L_2} \& {K_2} \\
	\\
	{C_1} \&\& S \&\& {C_2}
	\arrow["{[i_1,o_1]}", from=1-1, to=1-2]
	\arrow[from=1-1, to=3-1]
	\arrow["{q_1}", dotted, hook, from=1-2, to=1-3]
	\arrow["{q_1;\epsilon}"{description, pos=0.3}, from=1-2, to=3-3]
	\arrow["{f_2}"{description, pos=0.8}, from=1-2, to=3-5]
	\arrow["\epsilon"{description}, two heads, from=1-3, to=3-3]
	\arrow["{q_2}"', dotted, hook', from=1-4, to=1-3]
	\arrow["{f_1}"{description, pos=0.8}, from=1-4, to=3-1]
	\arrow["{q_2;\epsilon}"{description, pos=0.3}, from=1-4, to=3-3]
	\arrow["{[i_2,o_2]}"', from=1-5, to=1-4]
	\arrow[from=1-5, to=3-5]
	\arrow[from=3-1, to=3-3]
	\arrow["\lrcorner"{anchor=center, pos=0.125, rotate=180}, draw=none, from=3-3, to=1-1]
	\arrow["\lrcorner"{anchor=center, pos=0.125, rotate=90}, draw=none, from=3-3, to=1-5]
	\arrow[from=3-5, to=3-3]
\end{tikzcd}\]
\begin{itemize}
    \item Let $v_1 \in L_1, v_2 \in L_2$ be two glued nodes (they must come from different hypergraphs by Prop.~\ref{prop1}). Since this is a parallel pair, there are mappings $f_1 : L_2 \rightarrow C_1$, $f_2 : L_1 \rightarrow C_2$ such that the diagram above commutes. Because the triangle commutes, $f_1(v_2)$ must be sent to $[v_2] = [v_1] \in S$. Moreover, $(q_1;\epsilon)(v_1) = [v_1] = [v_2] \in S$.  But $S$ is a pushout, so the identified elements of $C_1$ and $L_1$ must be present in $K_1$, so $v_1$ and $f_1(v_2)$ have a preimage in $K_1$, and $v_1 \in [i_1,o_1](K_1)$. By symmetry of the argument we have $v_2 \in [i_2,o_2](K_2)$.
    \item Suppose there are glued hyperedges. Let $e_1 \in L_1$ and $e_2 \in L_2$ be mapped to the same $e \in S$. As before, for the identified nodes we can conclude that they are in the interfaces $[i_1,o_1], [i_2,o_2]$ by diagram chasing. But $K_1, K_2$ are discrete, so such hyperedges cannot exist - there are no elements $e'_1 \in C_2$, $e'_2 \in C_1$ such that $f_2^{-1}(e'_1) = e_1$ and $f_1 ^{-1}(e'_2) = e_2$ and the diagram commutes.
\end{itemize}
\textit{Proof of $\Leftarrow$}.
    Suppose the assumptions 1 and 2 hold, let's prove that the pre-critical pair is parallel. Consider 
\[\begin{tikzcd}[ampersand replacement=\&,cramped]
	{K_1} \& {L_1} \&\& {L_2} \& {K_2} \\
	{C_1} \&\& S \&\& {C_2}
	\arrow["{[i_1,o_1]}", hook, from=1-1, to=1-2]
	\arrow["{[i_1',o_1']}"', from=1-1, to=2-1]
	\arrow["{m_1}"{description}, from=1-2, to=2-3]
	\arrow["{m_2}"{description}, from=1-4, to=2-3]
	\arrow["{[i_2,o_2]}"', hook', from=1-5, to=1-4]
	\arrow["{[i_2',o_2']}", from=1-5, to=2-5]
	\arrow["{a_1}"{description}, from=2-1, to=2-3]
	\arrow["\lrcorner"{anchor=center, pos=0.125, rotate=180}, draw=none, from=2-3, to=1-1]
	\arrow["\lrcorner"{anchor=center, pos=0.125, rotate=90}, draw=none, from=2-3, to=1-5]
	\arrow["{a_2}"{description}, from=2-5, to=2-3]
\end{tikzcd}\]
The proof is by diagram chasing.

If no nodes are glued, then no hyperedges are glued and $S \cong L_1 + L_2$ which obviously is a parallel pair.

Otherwise, consider a pair of nodes $(v_1 \in L_1$, $v_2 \in L_2)$ with the same image $v = m_1(v_1) = m_2(v_2)$ in $S$.
By assumption, $v_2$ is in the image of $[i_2,o_2] : K_2 \rightarrow L_2$. 
Let $f_2 : L_1 \rightarrow C_2$ be the morphism sending $v_1$ to $[i_2',o_2'](v_2)$ and acting as an identity on the rest of the nodes and edges (since by assumption no edges from $L_1$ and $L_2$ glued).

By construction this will make the triangle commute.

Similarly for $f_1 : L_2 \rightarrow C_1$.
\end{proof}


\begin{proposition} \label{prop:critical-pair}
 A pre-critical pair
 \[ L_1 + L_2 \twoheadrightarrow \mathtt{coeq}_{G}(g_1,g_2) \xleftarrow{[\subseteq,\subseteq]} in(\mathtt{coeq}_{G}(g_1,g_2)) + out(\mathtt{coeq}_{G}(g_1,g_2))\] 
 yielded by a gluing scheme is a critical pair iff there are hyperedges separately from $L_1$ and $L_2$ that are glued.
\end{proposition}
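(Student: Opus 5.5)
The plan is to derive the statement from Prop.~\ref{prop:precritical_isParallel} by contraposition. A critical pair is by definition a pre-critical pair that is not parallel. So the pre-critical pair yielded by the gluing scheme is critical iff one of the two conditions of Prop.~\ref{prop:precritical_isParallel} fails. That means either (a) some hyperedge of $L_1$ is glued to some hyperedge of $L_2$, or (b) some node $A$ of $L_1$ is glued to some node $B$ of $L_2$ with at least one of them outside the interface of its rule. The ``if'' direction is then immediate from (a). For ``only if'' we must show that (b) forces (a), i.e.\ that whenever the gluing yields a pre-critical pair, every glued pair of nodes not both in the interfaces comes from glued hyperedges.

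For that step I will invoke Prop.~\ref{prop:gluingNodes}. Take glued nodes $A\in L_1$, $B\in L_2$ in a gluing that yields a pre-critical pair. By Prop.~\ref{prop:gluingNodes}, one of four cases holds. Either $A$ and $B$ are the $k$-th sources (or $i$-th targets) of a glued hyperedge, or $(A,B)$ is an output/input pair, or an input/output pair. In the last two cases both nodes lie in $in(L_j)\cup out(L_j)$, which is exactly the image of the interface $[i_j,o_j]\colon I_j+O_j\to L_j$, since the rules are left-connected and $I_j\to L_j\leftarrow O_j$ is an ma-cospan. So if (b) holds, with one of $A,B$ outside the interface, we must be in the first or second case. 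There a hyperedge is glued, and by Prop.~\ref{prop1} that hyperedge cannot be glued within $L_1$ or within $L_2$. Hence it is a hyperedge of $L_1$ glued to one of $L_2$, and (a) holds.

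The main subtlety I expect is making precise what ``a glued hyperedge'' means in Prop.~\ref{prop:gluingNodes}. The statement should be read as a hyperedge $e_1\in L_1$ and $e_2\in L_2$ identified in the coequaliser, with $A$, $B$ their respective $k$-th sources. I will read it this way, consistently with Prop.~\ref{prop1}, which rules out the alternative reading in which two hyperedges of the same $L_j$ are identified. I also need that ``glued'' in Prop.~\ref{prop:precritical_isParallel} means identified in $\mathtt{coeq}_G(g_1,g_2)$, i.e.\ in the equivalence closure of the relation generated by $G$, rather than only directly related by $G$. I will note that the same reading is used throughout, so the equivalence between non-parallelism and the existence of glued hyperedges across $L_1$ and $L_2$ goes through.
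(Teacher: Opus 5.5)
Your proof is correct and takes essentially the paper's route: the paper derives the statement from Prop.~\ref{prop:precritical_isParallel} together with the fact that $\mathtt{coeq}_{G}(g_1,g_2)$ is an ma-hypergraph, and your appeal to Prop.~\ref{prop:gluingNodes} simply packages that monogamy argument. Namely, a node of $L_1$ outside the interface that is glued to a node of $L_2$ forces their producing (or consuming) hyperedges to be identified, using that left-linearity prevents any node of $L_j$ from being both an input and an output.
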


\begin{proof}
	This is a consequence of Prop.~\ref{prop:precritical_isParallel} and $\mathtt{coeq}_{G}(g_1,g_2)$ being an ma-hypergraph.
\end{proof}

These observations suggest the following two-fold gluing process to yield a suitable gluing scheme $(g_1 \colon G \to L_1+L_2, g_2 \colon G \to L_1+L_2)$ that induces a critical pair: (1) glue (at least one pair of) hyperedges that are separately from $L_1$ and $L_2$, and (2) glue inputs/outputs that are separately from $L_1$ and $L_2$.

To compute such a gluing scheme, we use \emph{independent edge sets} on complete bipartite graphs.
\begin{definition}
Given two sets $A$ and $B$, the \textbf{complete bipartite graph} $K_{A,B}$ is defined as follows: its vertices are $A + B$ and there is an edge between every element of $A$ and every element of $B$.
 %

An \textbf{independent edge set}\footnote{Independent edge sets are also called ``matchings'' in graph theory.} on $K_{A,B}$ is a set of edges such that no two edges share common vertices.

The following shows an example of an independent edge set.

\[\begin{tikzcd}[sep = tiny]
	a &&&& 1 &&&&&&&& a &&& {} &&& 1 \\
	b &&&&&&&&&&&& b \\
	c &&&& 2 &&&&&&&& c &&& {} &&& 2 \\
	&& {K_{\{a,b,c\},\{1,2\}}}
	\arrow[no head, from=1-1, to=1-5]
	\arrow[no head, from=1-1, to=3-5]
	\arrow[dashed, no head, from=1-13, to=1-19]
	\arrow[no head, from=1-13, to=3-19]
	\arrow[no head, from=2-1, to=1-5]
	\arrow[no head, from=2-1, to=3-5]
	\arrow[dashed, no head, from=2-13, to=1-19]
	\arrow[dashed, no head, from=2-13, to=3-19]
	\arrow[no head, from=3-1, to=1-5]
	\arrow[no head, from=3-1, to=3-5]
	\arrow[no head, from=3-13, to=1-19]
	\arrow[dashed, no head, from=3-13, to=3-19]
\end{tikzcd}\]

\end{definition}

For each independent edge set on hyperedges, i.e. on $K_{\HEdges(L_1),\HEdges(L_2)}$ (or for each independent edge set on nodes, i.e. on $K_{\Nodes(L_1),\Nodes(L_2)}$), we can construct the \textbf{induced hypergraph} $\gamma$ as follows:
for each edge connecting two vertices in the independent edge set, we add the pair of hyperedges (or pair of nodes) associated with those vertices together with their induced pairs of sources and targets.
We let $p^\gamma_1 : \gamma \to L_1$ be the first projection of $\gamma$ into $L_1$ and $p^\gamma_2 : \gamma \to L_2$ be the second projection of $\gamma$ into $L_2$.
Consequently, we obtain a gluing scheme $(p^\gamma_1;\iota_1 \colon \gamma \to L_1+L_2, p^\gamma_2;\iota_2 \colon \gamma \to L_1+L_2)$.

By suitably generating independent edge sets firstly on hyperedges (i.e.\ $K_{\HEdges(L_1),\HEdges(L_2)}$), and secondly on nodes (i.e.\ $K_{\Nodes(L_1),\Nodes(L_2)}$), in particular its restriction on inputs and outputs, we can compute gluing schemes that glue hyperedges and nodes as specified by the independent edge sets and hence induces a critical pair; if there is an edge between two vertices of a bipartite graph, the two endpoints of the edge gets merged (glued).

Our algorithm uses subroutines to enumerate independent edge sets on a complete bipartite graph $K_{a,b}$. We assume that the sets $a$ and $b$ are totally ordered. We believe this is a reasonable assumption, because the set of hyperedges and the set of nodes, of a hypergraph, are typically implemented using a totally ordered data structure. The zip function turns two lists of the same length into a list of pairs.

\begin{algorithm}[H]
    \caption{$\texttt{enumerateIndependentEdgeSets}$ for enumerating independent edge sets}\label{algo1}
    \KwIn{a set $a$ and a set $b$}
    \KwOut{independent edge sets on $K_{a,b}$}
    \Return $\bigcup_{k \in \llbracket 0, \min(|a|,|b|) \rrbracket}  {\texttt{enumerateKIndependentEdgeSets}}(a,b,k)$ \;
\end{algorithm}

\begin{algorithm}[H]
    \caption{$\texttt{enumerateKIndependentEdgeSets}$ for enumerating $k$ independent edge sets}\label{algo2}
    \SetKwInOut{Input}{Input}\SetKwInOut{Output}{Output}\SetKw{Yield}{yield}
    \Input{a set $a$, a set $b$, and a number $k$}
    \Output{independent edge sets on $K_{a,b}$ with $k$ edges}
    \For{$x \subseteq a$ such that $|x| = k$}{
        \For{$y$ being a partial permutation of $y' \subseteq b$ such that $|y| = k$}{
            \Yield zip(x,y) 
        }
    }
\end{algorithm}


\begin{proposition} \label{prop:independent-edge-sets}
There are $\sum_{0 \leq k \leq min(|a|,|b|)} k! \binom{|a|}{k} \binom{|b|}{k}$ independent edge sets on $K_{a,b}$.
\end{proposition}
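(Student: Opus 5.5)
We count the independent edge sets on $K_{a,b}$ by partitioning them according to their size $k$, i.e.\ the number of edges they contain. An independent edge set with $k$ edges touches exactly $k$ distinct vertices of $a$ and $k$ distinct vertices of $b$, since no two edges share an endpoint. Hence $k$ ranges over $0 \leq k \leq \min(|a|,|b|)$. Because the size classes are disjoint, the total count is the sum over $k$ of the number of independent edge sets of size exactly $k$. It therefore suffices to show that this number equals $k!\binom{|a|}{k}\binom{|b|}{k}$.

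For fixed $k$, we set up a bijection. On one side are the independent edge sets of size $k$. On the other side are triples $(x, y', \pi)$ where $x \subseteq a$ with $|x| = k$, $y' \subseteq b$ with $|y'| = k$, and $\pi \colon x \to y'$ is a bijection. Given an independent edge set $M$, let $x$ be the set of its endpoints in $a$ and $y'$ the set of its endpoints in $b$. Each $u \in x$ lies on exactly one edge of $M$, and we send $u$ to the other endpoint of that edge. Disjointness of the edges makes this a well-defined map $\pi \colon x \to y'$, and it is injective, hence a bijection between sets of equal size. Conversely, a triple $(x,y',\pi)$ yields the edge set $\{\{u,\pi(u)\} \mid u \in x\}$. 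This set is independent because $\pi$ is injective and its $k$ edges are pairwise distinct. The two constructions are mutually inverse.

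Counting triples then gives $\binom{|a|}{k}$ choices of $x$, $\binom{|b|}{k}$ choices of $y'$, and $k!$ bijections between two $k$-element sets. This produces the summand $k!\binom{|a|}{k}\binom{|b|}{k}$, and summing over $k$ proves the claim.

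The argument also matches Algo.~\ref{algo2}. There, with $x$ listed in its total order and $y$ ranging over the orderings of a $k$-subset $y' \subseteq b$, the operation zip$(x,y)$ realises exactly the bijection $\pi$ listing $x$ in order. So the algorithm yields each independent edge set exactly once. The only delicate point is the edge case $k = 0$: the empty set of edges corresponds to the single triple $(\emptyset,\emptyset,\emptyset)$, and it is counted once, consistent with $0!\binom{|a|}{0}\binom{|b|}{0} = 1$. Nothing else is expected to be an obstacle; the main work is checking that the correspondence with triples is injective, which follows from the independence condition.
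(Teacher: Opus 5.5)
Your proof is correct and takes essentially the same route as the paper. You stratify by the number $k$ of edges and count an independent edge set of size $k$ as a $k$-subset of $a$, a $k$-subset of $b$ and a bijection between them, which is exactly the enumeration in Algo.~\ref{algo2} (subsets $x$ zipped with $k$-permutations $y$ of $b$, giving $\binom{|a|}{k}\cdot k!\binom{|b|}{k}$ outputs for each $k$).
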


We can now present our critical pair enumeration algorithm (Algo.~\ref{algo3}). The implicitly defined subroutine $\texttt{InducedHypergraphs}$ computes induced hypergraphs of a given set of independent edge sets on hyperedges or nodes.

The following theorem validates Algo.~\ref{algo3}. In particular, correctness implies that the necessary conditions identified in Prop.~\ref{prop1}, Prop.~\ref{prop:gluingNodes} and Prop.~\ref{prop:gluingNodesIO} are sufficient as well.
\begin{theorem} \label{thm:main}
	Algo.~\ref{algo3} is correct and exhaustive. That is,
    \begin{description}
        \item[Correctness] \label{item:correctness} Each result $L_i + L_j \overset{\epsilon_{ij\gamma'}}{\twoheadrightarrow} S_{ij\gamma'} \xleftarrow{[\subseteq,\subseteq]} I'+O'$ of Algo.~\ref{algo3} is a critical pair.
        \item[Exhaustiveness] \label{item:exhaustiveness} Any critical pair of the form $L_i + L_j \twoheadrightarrow X \xleftarrow{[\subseteq,\subseteq]} in(X)+out(X)$ can be yielded by Algo.~\ref{algo3}.
    \end{description}
\end{theorem}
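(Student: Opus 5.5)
The plan is to prove the two halves separately. Correctness goes by checking that each output of Algo.~\ref{algo3} meets the conditions of a cp-cospan and then invoking Prop.~\ref{prop:critical-pair}. Exhaustiveness goes by reconstructing, from an arbitrary critical pair, the two independent edge sets that the algorithm must have enumerated.

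I read Algo.~\ref{algo3} as working on a pair of rules $L_i, L_j$ in four stages:
\begin{enumerate}
\item enumerate nonempty independent edge sets on $K_{\HEdges(L_i),\HEdges(L_j)}$, restricted to pairs with the same label;
\item form the induced hypergraph $\gamma$ and its coequaliser;
\item enumerate independent edge sets on the input/output nodes, of the shape allowed by Prop.~\ref{prop:gluingNodes} and Prop.~\ref{prop:gluingNodesIO};
\item keep only those candidate sources $S_{ij\gamma'}$ for which the two composites $\iota_k;\epsilon$ are mono and $S_{ij\gamma'}$ is an ma-hypergraph.
\end{enumerate}

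\textbf{Correctness.} Take an output $L_i + L_j \overset{\epsilon}{\twoheadrightarrow} S \leftarrow I'+O'$.
\begin{enumerate}
\item $\epsilon$ is a coequaliser, hence a regular epi, and in particular epi.
\item The final filter guarantees that $\iota_1;\epsilon$ and $\iota_2;\epsilon$ are mono. By Prop.~\ref{prop:leftConnected_prop} they are then convex matches satisfying the boundary complement condition, and by Prop.~\ref{prop:pushoutCompl_leftConnected} the pushout complements exist uniquely.
\item $I' = in(S)$ and $O' = out(S)$ with the inclusions form an ma-cospan once $S$ is ma.
\item So the cospan is a cp-cospan and determines a pre-critical pair. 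Since at least one pair of hyperedges from $L_i$ and $L_j$ is glued, Prop.~\ref{prop:critical-pair} makes it a critical pair.
\end{enumerate}
If the algorithm does not filter explicitly, I would instead prove monicity directly. Two independent edge sets never identify two elements of the same side directly. The risk is transitive identification through source/target lists: gluing $e_1\sim e_2$ also glues their $k$-th sources, and a chain could close up. Independence on both levels, together with monogamy of $L_i,L_j$, should rule this out, since each node is the $k$-th source of at most one hyperedge.

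\textbf{Exhaustiveness.} Take a critical pair $L_i+L_j \overset{[m_1,m_2]}{\twoheadrightarrow} X$.
\begin{enumerate}
\item Let $M_E = \{(e_1,e_2) : m_1(e_1)=m_2(e_2)\}$. Since $m_1$ and $m_2$ are mono, $M_E$ is an independent edge set, and it is nonempty by Prop.~\ref{prop:critical-pair}.
\item Let $M_V$ be the set of node pairs $(v_1,v_2)$ with $m_1(v_1)=m_2(v_2)$ that are not already forced by $M_E$ through source/target positions. By Prop.~\ref{prop:gluingNodes} these are input/output pairs, and by Prop.~\ref{prop:gluingNodesIO} they all go in one direction. $M_V$ is again independent by monicity.
\item Both sets are enumerated by Algo.~\ref{algo1}, which by Prop.~\ref{prop:independent-edge-sets} lists all independent edge sets. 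So the algorithm reaches the gluing scheme induced by $M_E$ followed by $M_V$.
\item It remains to show that $X \cong \mathtt{coeq}$ compatibly with $\epsilon$. The map $[m_1,m_2]$ coequalises both schemes, which gives a comparison map $\mathtt{coeq}\to X$. It is surjective because $[m_1,m_2]$ is epi, and in $\Hyp_\Sigma$ epi means componentwise surjective. It is injective because every identification made by $[m_1,m_2]$ is a pair in $M_E$, a pair in $M_V$, or a source/target consequence of $M_E$. The last kind is already identified in the coequaliser, since $\gamma$ carries the induced sources and targets.
\item So $X$ is the candidate source, and it passes the filter because $X$ is a genuine cp-cospan.
\end{enumerate}

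The main obstacle is the injectivity of the comparison map $\mathtt{coeq}\to X$, and symmetrically monicity in the correctness direction. Coequalisers in a presheaf category identify the equivalence closure of the generated relation, so I must check that this closure never relates two elements that $[m_1,m_2]$ keeps apart, nor two elements of the same $L_k$. I would argue this componentwise on nodes. Every generated pair relates an $L_i$-node to an $L_j$-node, and the relation is a partial bijection: monogamy plus independence make each node glued to at most one partner. A partial bijection between the two sides is already transitively closed, so the closure adds nothing new.
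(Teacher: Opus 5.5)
Your decomposition is the same as the paper's. For correctness you use that a coequaliser is epi, the line-15 filter, and Prop.~\ref{prop:critical-pair}; for exhaustiveness you rebuild the gluing scheme from the two-element preimage sets. However, the monicity step of the correctness half has a genuine gap.

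Algo.~\ref{algo3} contains no monicity test. Line~15 only checks that $in(S)\to S\leftarrow out(S)$ is an ma-cospan, so your step~2 cannot appeal to ``the final filter''. Your fallback is false. Independence plus monogamy of $L_i,L_j$ do \emph{not} make the generated node relation a partial bijection, because a node with both an in- and an out-hyperedge can acquire a different partner through each.

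Concretely, take a strongly connected left-hand side $x\xrightarrow{a_1}v\xrightarrow{a_2}y$ (two composable unary hyperedges). Pair it with a copy $p\xrightarrow{h_1}q\xrightarrow{h_2}r$, and use the independent edge set $\{(a_1,h_2),(a_2,h_1)\}$. The induced node pairs are $x\sim q$, $v\sim r$, $v\sim p$, $y\sim q$. Hence $x\sim y$ and $p\sim r$, so neither composite $\iota_k;\epsilon$ is mono, even though no two elements of the same side were glued directly.

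The second stage can do the same. An input $v$ of $L_i$ glued as a $k$-th source to an input $w$ of $L_j$ yields a class in $I_1$. Then $\gamma'$ may glue that class to the class of an output $y$ of $L_j$, identifying $w\neq y$.

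What actually yields monicity is the line-15 check together with strong connectivity. Hyperedges are fine: the hyperedge part of $\gamma$ is an independent edge set and $\gamma'$ is discrete. For nodes, suppose $v_1\neq v_2$ in $L_i$ have the same image $u$.
\begin{itemize}
\item They cannot both have an in-hyperedge. By injectivity on hyperedges, their images would give two distinct in-incidences at $u$, contradicting monogamy of $S$. The same holds for out-hyperedges.
\item A strongly connected $L_i$ with hyperedges has no isolated nodes. So one of $v_1,v_2$ is an input and the other an output of $L_i$.
\item Strong connectivity gives a path between them, and its image is a cycle through $u$, contradicting acyclicity of $S$. This is the argument of Prop.~\ref{prop:gluingNodesIO}.
\end{itemize}
The candidate above is indeed discarded at line~15, since its $S$ is a 2-cycle. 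The paper's own proof settles this step in one line with the same insufficient reason (``$\gamma$ and $\gamma'$ do not glue nodes \dots\ of the same hypergraph''), so this filter-based argument is what you need to supply.

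In the exhaustiveness half, by contrast, the worry in your last paragraph is vacuous. The map $[m_1,m_2]$ coequalises both schemes, so its kernel already contains the whole generated equivalence relation. Injectivity of $\mathtt{coeq}\to X$ is the converse inclusion, which your step~4 already proves. The one detail to add there is that the two nodes of each pair in $M_V$ are unglued in $S_{ij\gamma}$, by monicity of $m_1,m_2$. Hence their classes really are inputs/outputs of $S_{ij\gamma}$ and lie in $I_1\times O_2$ or $I_2\times O_1$.
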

\begin{proof}[Proof of correctness.]
	A coequalizer is an epimorphism, thus $\epsilon_{ij\gamma}$ and $\epsilon_{ij\gamma'}$ is epi. Their composition is therefore epi. $I' \xrightarrow{\subseteq} S_{ij\gamma'} \xleftarrow{\subseteq} O' $ is a ma-cospan as required by the pre-critical pair with interface definition because of the if statement in line 15. Moreover, the matchings $\iota_1;\epsilon_{ij\gamma};\epsilon_{ij\gamma'}$ and $\iota_2;\epsilon_{ij\gamma};\epsilon_{ij\gamma'}$ are mono, because the gluing schemes $\gamma$ and $\gamma'$ do not glue nodes and hyperedges of the same hypergraph. The gluing scheme $\gamma$ also glues at least a pair of hyperedges (line 4). Therefore, by Prop.~\ref{prop:critical-pair}, each result yielded is a critical pair.

\textit{Proof of exhaustiveness.}
Let $L_i + L_j \overset{\epsilon}{\twoheadrightarrow} X \xleftarrow{[\subseteq,\subseteq]} in(X)+out(X)$ be a pre-critical pair with interface.

An epimorphism of hypergraphs is surjective on nodes and on hyperedges because the category of hypergraphs is a presheaf category.

Thus, each node and each hyperedge of $X$ has a non-empty preimage set by $\epsilon$.
Moreover, each preimage set by $\epsilon$ contains: (i) at most two elements, and (ii) if there are two elements, they come separately from $L_1$ and $L_2$. This is because $\iota_i;\epsilon$ and $\iota_j;\epsilon$ are mono.

We construct a hypergraph $\gamma$ whose nodes are given by preimage sets $\epsilon^{-1}(v)$ with size $2$ for $v \in \Nodes(X)$, and hyperedges are given by preimage sets $\epsilon^{-1}(e)$ with size $2$ for $e \in \HEdges(X)$. It comes with two hypergraph homomorphisms $F_1 \colon \gamma \to L_1+L_2$ and $F_2 \colon \gamma \to L_1+L_2$, such that $F_1$ maps a preimage set to its element from $L_1$ and $F_2$ maps a preimage set to its element from $L_2$. We obtain a gluing scheme $(F_1,F_2)$, and $X$ is the gluing of $(F_1,F_2)$.

Because this gluing scheme induces a critical pair, it satisfies the necessary conditions of the propositions in Sec.~\ref{sec:algo}. Namely:
\begin{enumerate}
 \item If it glues edges, they are separately from $L_1$ and $L_2$.
 \item It glues at least a pair of hyperedges separately from $L_1$ and $L_2$.
 \item If it glues nodes, they are either a source/target of glued hyperedges, or input/output separately from $L_1$ and $L_2$.
\end{enumerate}
These conditions are realised by Algo.~\ref{algo3}, respectively by lines 2 \& 10, line 4, and line 10.

 We can therefore conclude that the merging of hyperedges and nodes specified by the gluing scheme $(F_1,F_2)$ is implemented by Algo.~\ref{algo3}.

\end{proof}

We implement Algo.~\ref{algo3} in Haskell\footnote{Available online at \url{https://github.com/GuiSab/hypergraphrewriting}}, and test it using the example of non-commutative bimonoids \cite[Sec.~6.1]{DBLP:journals/mscs/BonchiGKSZ22a}. While there are 22 critical pairs, the implementation outputs 58 critical pairs. This is due to duplication caused by isomorphic gluing schemes $\gamma, \gamma'$. Our implementation currently does not check for isomorphisms of hypergraphs.


\begin{algorithm}[H]
    \caption{An algorithm for enumerating all critical pairs}\label{algo3}
    \SetKwInOut{Input}{Input}\SetKwInOut{Output}{Output}\SetKw{Yield}{yield}
    \Input{rewrite rules $\rho = \{ L_i \overset{f_i}{\leftarrow} K_i \overset{g_i}{\rightarrow} R_i \}_{i \in I}$}
    \Output{epimorphisms with interface $\{ \{ L_i + L_j \overset{\epsilon}{\twoheadrightarrow} S_{ij\gamma} \leftarrow I+O \}_{\gamma \in I_{ij}} \}_{(i,j) \in I^2}$}
    \For{$(i,j) \in I^2$}{
        \For{$\gamma \in \texttt{InducedHypergraphs}(\bigcup\limits_{l \in \Sigma}^{} \texttt{enumerateIndependentEdgeSets}($\\
        $\{e \mid e \in Hyperedges(L_i), label(e) = l\},\{e \mid e \in Hyperedges(L_j), label(e) = l\}))$}{
            \If{$\gamma$ has at least a hyperedge}{
                $(S_{ij\gamma}, \epsilon_{ij\gamma}) = \mathtt{coeq}_{\gamma}(p^\gamma_1;\iota_1, p^\gamma_2;\iota_2)$\;
                \tcc{the coequalizer (
                    \begin{tikzcd}[sep = scriptsize, ampersand replacement=\&,cramped]
                        \gamma \&\& {L_i+L_j} \&\& {S_{ij\gamma}}
                        \arrow["{p^{\gamma}_1;\iota_1}", curve={height=-6pt}, from=1-1, to=1-3]
                        \arrow["{p^{\gamma}_2;\iota_2}"', curve={height=6pt}, from=1-1, to=1-3]
                        \arrow["{\epsilon_{ij\gamma}}", dotted, from=1-3, to=1-5]
                    \end{tikzcd} in $\mathbf{Hyp}_\Sigma$)}
                $I_1 = in(S_{ij\gamma}) \cap \epsilon_{ij\gamma}(\iota_1(in(L_i)))$\;
                $I_2 = in(S_{ij\gamma}) \cap \epsilon_{ij\gamma}(\iota_2(in(L_j)))$\;
                $O_1 = out(S_{ij\gamma}) \cap \epsilon_{ij\gamma}(\iota_1(out(L_i)))$\;
                $O_2 = out(S_{ij\gamma}) \cap \epsilon_{ij\gamma}(\iota_2(out(L_j)))$\;
                \For{$\gamma' \in \texttt{InducedHypergraphs}(\texttt{enumerateIndependentEdgeSets}(I_1,O_2) +$\\
                     $\texttt{enumerateIndependentEdgeSets}(I_2,O_1))$}{
                    $(S_{ij\gamma'}, \epsilon_{ij\gamma'}) = \mathtt{coeq}_{\gamma'}(p^{\gamma'}_1, p^{\gamma'}_2)$\;
                    \tcc{the coequalizer (
\begin{tikzcd}[ampersand replacement=\&,cramped]
	{\gamma'} \&\& {S_{ij\gamma}} \&\& {S_{ij\gamma'}}
	\arrow["{p^{\gamma'}_1}", curve={height=-6pt}, from=1-1, to=1-3]
	\arrow["{p^{\gamma'}_2}"', curve={height=6pt}, from=1-1, to=1-3]
	\arrow["{\epsilon_{ij\gamma'}}", dotted, from=1-3, to=1-5]
\end{tikzcd} in $\mathbf{Hyp}_\Sigma$)}
                    $I' = in(S_{ij\gamma'})$\;
                    $O' = out(S_{ij\gamma'})$\;
                    \If{$I' \xrightarrow{\subseteq} S_{ij\gamma'} \xleftarrow{\subseteq} O' $ is a ma-cospan}{
                
                        \Yield{$L_i + L_j \overset{\epsilon_{ij\gamma};\epsilon_{ij\gamma'}}{\twoheadrightarrow} S_{ij\gamma'} \xleftarrow{[\subseteq,\subseteq]} I'+O'$}\;
                    }
                }
            }
        }
    }
\end{algorithm}


\begin{example}
    
We compute critical pairs associated to the following pair of rules that is taken from the example of non-commutative bimonoids \cite[Sec.~6.1]{DBLP:journals/mscs/BonchiGKSZ22a}.

\[\begin{tikzcd}[cramped,sep=tiny, scale cd = 0.7]
	&& {L_1} &&& {} && {} && {K_1} && {} && {} &&& {R_1} \\
	\\
	0 &&&&&&&& 0 &&&&&&&& 0 \\
	& {\boxed{\mu}_1} & 4 &&& {} && {} & 1 && 3 & {} && {} & 1 &&& {\boxed{\mu}_4} & 3 \\
	1 &&& {\boxed{\mu}_2} & 3 &&&& 2 &&&&&&& {\boxed{\mu}_3} & 4 \\
	&& 2 &&&&&&&&&&&& 2
	\arrow[from=1-8, to=1-6]
	\arrow[from=1-12, to=1-14]
	\arrow[no head, from=3-1, to=4-2]
	\arrow[no head, from=3-17, to=4-18]
	\arrow[from=4-2, to=4-3]
	\arrow[no head, from=4-3, to=5-4]
	\arrow[from=4-8, to=4-6]
	\arrow[from=4-12, to=4-14]
	\arrow[no head, from=4-15, to=5-16]
	\arrow[from=4-18, to=4-19]
	\arrow[no head, from=5-1, to=4-2]
	\arrow[from=5-4, to=5-5]
	\arrow[from=5-16, to=5-17]
	\arrow[from=5-17, to=4-18]
	\arrow[no head, from=6-3, to=5-4]
	\arrow[no head, from=6-15, to=5-16]
\end{tikzcd}\]

\[\begin{tikzcd}[cramped,sep=tiny, scale cd = 0.7]
	& {L_2} &&& {} && {} && {K_2} && {} && {} &&& {R_2} \\
	& 5 \\
	&& {\boxed{\mu}_1} & 6 &&&& 5 && 6 &&&&&& {5=6} \\
	{\boxed{\eta}_1} & 7
	\arrow[from=1-7, to=1-5]
	\arrow[from=1-11, to=1-13]
	\arrow[no head, from=2-2, to=3-3]
	\arrow[from=3-3, to=3-4]
	\arrow[from=4-1, to=4-2]
	\arrow[no head, from=4-2, to=3-3]
\end{tikzcd}\]

We first enumerate the independent edge sets associated to the labels $\mu$: there are three independent edge sets, namely $\{\},\{(\boxed{\mu}_1,\boxed{\mu}_1)\}$ and $\{(\boxed{\mu}_2,\boxed{\mu}_1)\}$.
There is only one independent edge set for the label $\eta$, namely the empty one.
We thus have 2 gluing schemes for the hyperedges which are not empty:

\[\begin{tikzcd}[ampersand replacement=\&,cramped, sep = tiny]
	{(0,5)} \&\&\&\& {(4,5)} \\
	\& {\boxed{\mu}_{(1,1)}} \& {(4,6)} \& {\text{and}} \&\& {\boxed{\mu}_{(2,1)}} \& {(3,6)} \\
	{(1,7)} \&\&\&\& {(2,7)}
	\arrow[no head, from=1-1, to=2-2]
	\arrow[no head, from=1-5, to=2-6]
	\arrow[from=2-2, to=2-3]
	\arrow[from=2-6, to=2-7]
	\arrow[no head, from=3-1, to=2-2]
	\arrow[no head, from=3-5, to=2-6]
\end{tikzcd}\]

The gluings associated to the gluing schemes are the following:

\[\begin{tikzcd}[ampersand replacement=\&,cramped, sep = small, scale cd = 0.8]
	\& {[0]} \&\&\&\&\&\& {[0]} \\
	\&\& {\boxed{\mu}_{[1]}} \& {[4]} \&\&\&\&\& {\boxed{\mu}_{[1]}} \& {[4]} \\
	{\boxed{\eta}_{[1]}} \& {[1]} \&\&\& {\boxed{\mu}_{[2]}} \& {[3]} \& {\text{and}} \& {[1]} \&\&\& {\boxed{\mu}_{[2]}} \& {[3]} \\
	\&\&\& {[2]} \&\&\&\&\& {\boxed{\eta}_{[1]}} \& {[2]}
	\arrow[no head, from=1-2, to=2-3]
	\arrow[no head, from=1-8, to=2-9]
	\arrow[from=2-3, to=2-4]
	\arrow[no head, from=2-4, to=3-5]
	\arrow[from=2-9, to=2-10]
	\arrow[no head, from=2-10, to=3-11]
	\arrow[from=3-1, to=3-2]
	\arrow[no head, from=3-2, to=2-3]
	\arrow[from=3-5, to=3-6]
	\arrow[no head, from=3-8, to=2-9]
	\arrow[from=3-11, to=3-12]
	\arrow[no head, from=4-4, to=3-5]
	\arrow[from=4-9, to=4-10]
	\arrow[no head, from=4-10, to=3-11]
\end{tikzcd}\]

For the first gluing, we compute $I_1 = \{[0],[2]\}$, $I_2 = \{[0]\}$, $O_1 = \{[3]\}$, $O_2 = \{\}$. The only independent edge sets on the nodes are $\{\}$ and $\{([0],[3])\}$. The gluing associated to  $\{([0],[3])\}$ is not acyclic:

\[\begin{tikzcd}[ampersand replacement=\&,cramped, sep = small, scale cd = 0.7]
	\& {[0]} \\
	\&\& {\boxed{\mu}_{[1]}} \& {[4]} \\
	{\boxed{\eta}_{[1]}} \& {[1]} \&\&\& {\boxed{\mu}_{[2]}} \\
	\&\&\& {[2]}
	\arrow[no head, from=1-2, to=2-3]
	\arrow[from=2-3, to=2-4]
	\arrow[no head, from=2-4, to=3-5]
	\arrow[from=3-1, to=3-2]
	\arrow[no head, from=3-2, to=2-3]
	\arrow[curve={height=30pt}, from=3-5, to=1-2]
	\arrow[no head, from=4-4, to=3-5]
\end{tikzcd}\]

Therefore, we only yield the critical pair given by the first gluing.


For the second gluing, we compute $I_1 = \{[0],[1]\}$, $I_2 = \{\}$, $O_1 = \{[3]\}$, $O_2 = \{3\}$. The independent edge sets on the nodes are $\{\}$, $\{([0],[3])\}$ and $\{([1],[3])\}$. The gluing associated to  $\{([0],[3])\}$ and $\{([1],[3])\}$ are not acyclic:

\[\begin{tikzcd}[ampersand replacement=\&,cramped, sep = small, scale cd = 0.7]
	{[0]} \&\&\&\&\&\& {[0]} \\
	\& {\boxed{\mu}_{[1]}} \& {[4]} \&\&\&\&\& {\boxed{\mu}_{[1]}} \& {[4]} \\
	{[1]} \&\&\& {\boxed{\mu}_{[2]}} \&\&\& {[1]} \&\&\& {\boxed{\mu}_{[2]}} \\
	\& {\boxed{\eta}_{[1]}} \& {[2]} \&\&\&\&\& {\boxed{\eta}_{[1]}} \& {[2]}
	\arrow[no head, from=1-1, to=2-2]
	\arrow[no head, from=1-7, to=2-8]
	\arrow[from=2-2, to=2-3]
	\arrow[no head, from=2-3, to=3-4]
	\arrow[from=2-8, to=2-9]
	\arrow[no head, from=2-9, to=3-10]
	\arrow[no head, from=3-1, to=2-2]
	\arrow[curve={height=30pt}, from=3-4, to=1-1]
	\arrow[no head, from=3-7, to=2-8]
	\arrow[from=3-10, to=3-7]
	\arrow[from=4-2, to=4-3]
	\arrow[no head, from=4-3, to=3-4]
	\arrow[from=4-8, to=4-9]
	\arrow[no head, from=4-9, to=3-10]
\end{tikzcd}\]

Therefore, we only yield the critical pair given by the second gluing: 

\[\begin{tikzcd}[ampersand replacement=\&,cramped, sep = small, scale cd = 0.7]
	{[0]} \\
	\& {\boxed{\mu}_{[1]}} \& {[4]} \\
	{[1]} \&\&\& {\boxed{\mu}_{[2]}} \& {[3]} \\
	\& {\boxed{\eta}_{[1]}} \& {[2]}
	\arrow[no head, from=1-1, to=2-2]
	\arrow[from=2-2, to=2-3]
	\arrow[no head, from=2-3, to=3-4]
	\arrow[no head, from=3-1, to=2-2]
	\arrow[from=3-4, to=3-5]
	\arrow[from=4-2, to=4-3]
	\arrow[no head, from=4-3, to=3-4]
\end{tikzcd}\]
\end{example}

\section{An Optimisation} \label{sec:opt}

Algo.~\ref{algo3} implements the two-fold gluing process, firstly gluing hyperedges and secondly gluing inputs/outputs.
We can in fact prove that the second step is redundant, for the purpose of critical pair analysis (and local-confluence check).

Let $S_{ij\gamma}$ be a gluing of hyperedges of Algo.~\ref{algo3} and $S_{ij\gamma'}$ a gluing of nodes on $S_{ij\gamma}$.

\begin{proposition} \label{prop:cp-optimisation}
If $S_{ij\gamma'}$ yields a critical pair, then $S_{ij\gamma}$ yields a critical pair as well.
\end{proposition}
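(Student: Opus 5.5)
Since $S_{ij\gamma'}$ is a quotient of $S_{ij\gamma}$, the plan is to transfer the defining properties of a critical pair back along $\epsilon_{ij\gamma'}$. The yielded cospan $L_i+L_j \twoheadrightarrow S_{ij\gamma} \leftarrow in(S_{ij\gamma})+out(S_{ij\gamma})$ must be shown to be a cp-cospan: the left leg must be epi with mono components, and $S_{ij\gamma}$ with its canonical interface must be an ma-cospan. Once this holds, Prop.~\ref{prop:critical-pair} gives non-parallelism, because the hyperedge gluing in $\gamma$ is the same in both cases and $\gamma$ has at least one hyperedge (line 4).

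First, $\epsilon_{ij\gamma}$ is a coequalizer and hence epi. The matches $\iota_1;\epsilon_{ij\gamma}$ and $\iota_2;\epsilon_{ij\gamma}$ are mono: they factor the matches $\iota_k;\epsilon_{ij\gamma};\epsilon_{ij\gamma'}$, which are mono by hypothesis, and if $f;g$ is mono then $f$ is mono.

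The main step is to show that $S_{ij\gamma}$ is an ma-hypergraph. The key observation is that $\epsilon_{ij\gamma'}$ is bijective on hyperedges, since $\gamma'$ is discrete. Moreover, it only identifies nodes pairwise, gluing an input-side node of one rule with an output-side node of the other.

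Acyclicity transfers as follows. Any path or cycle in $S_{ij\gamma}$ maps to a path or cycle in $S_{ij\gamma'}$, because $\epsilon_{ij\gamma'}$ preserves the source and target incidences of hyperedges. A cycle in $S_{ij\gamma}$ would therefore give a cycle in the acyclic $S_{ij\gamma'}$, which is impossible.

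Monogamy transfers similarly. Suppose a node $v$ of $S_{ij\gamma}$ had in-degree at least $2$, via distinct pairs $(e,k)$ and $(e',k')$. These pairs stay distinct in $S_{ij\gamma'}$, since $\epsilon_{ij\gamma'}$ is injective on hyperedges, and they both land on $\epsilon_{ij\gamma'}(v)$. That node would then have in-degree at least $2$ in $S_{ij\gamma'}$, contradicting monogamy there. Out-degree is handled symmetrically. In short, degrees can only grow under a quotient that is injective on hyperedges.

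With $S_{ij\gamma}$ an ma-hypergraph, $in(S_{ij\gamma})\hookrightarrow S_{ij\gamma}\hookleftarrow out(S_{ij\gamma})$ is by definition an ma-cospan (inclusions onto exactly the input and output nodes). So the cospan is a cp-cospan and determines a pre-critical pair; uniqueness of the pushout complements comes from Prop.~\ref{prop:pushoutCompl_leftConnected} and the boundary complement condition from Prop.~\ref{prop:leftConnected_prop}.

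For non-parallelism, apply Prop.~\ref{prop:critical-pair}: the scheme $\gamma$ glues a hyperedge of $L_i$ with one of $L_j$, so the pre-critical pair is critical. The delicate point is justifying that the monogamy and acyclicity reflection really needs only hyperedge-injectivity of $\epsilon_{ij\gamma'}$. I expect this to be routine once the degree-counting argument is stated carefully.
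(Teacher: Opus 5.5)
Your proof is correct and follows the same route as the paper, which says in one line that $S_{ij\gamma}$ is acyclic and monogamous because it is $S_{ij\gamma'}$ with the node gluings undone. You also make explicit what the paper leaves implicit: mono-ness of $\iota_k;\epsilon_{ij\gamma}$ by factoring through $\epsilon_{ij\gamma'}$, the degree count using injectivity of $\epsilon_{ij\gamma'}$ on hyperedges, and non-parallelism via Prop.~\ref{prop:critical-pair}.
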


\begin{proof}
If $S_{ij\gamma'}$ is monogamous acyclic, then the hypergraph $S_{ij\gamma}$ in which no nodes were glued cannot be cyclic; moreover, it will respect the monogamy condition.
 %
\end{proof}

We now suppose that $S_{ij\gamma'}$ yields a critical pair (and thus $S_{ij\gamma}$ yields a critical pair as well).

\begin{proposition}\label{prop:convexMatch_optimisation}
Any convex match in $I \to S_{ij\gamma} \leftarrow O$ induces a convex match in $I' \to S_{ij\gamma'} \leftarrow O'$.
\end{proposition}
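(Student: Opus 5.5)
Given a convex match $m \colon L \to S_{ij\gamma}$, the induced match will be the composite $m ; \epsilon_{ij\gamma'} \colon L \to S_{ij\gamma'}$. By Prop.~\ref{prop:leftConnected_prop}, in a left-connected system every mono match is convex. So it suffices to show that $m;\epsilon_{ij\gamma'}$ is mono, and then to check that the interfaces are compatible, i.e.\ that the composite is still a match into the ma-cospan $I' \to S_{ij\gamma'} \leftarrow O'$.

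For monicity, I would describe $\epsilon_{ij\gamma'}$ concretely. By construction of Algo.~\ref{algo3} (line 10), the scheme $\gamma'$ is discrete: it has no hyperedges. Its nodes are pairs from an independent edge set on $I_1 \times O_2$ or $I_2 \times O_1$. Hence $\epsilon_{ij\gamma'}$ is bijective on hyperedges. On nodes, it identifies exactly the pairs (input of $S_{ij\gamma}$, output of $S_{ij\gamma}$) listed in $\gamma'$. No chains of identifications arise, because the edge set is independent and inputs are disjoint from outputs unless a node is isolated; an isolated node would violate monogamy or strong connectivity, so I would rule that case out.

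So $m;\epsilon_{ij\gamma'}$ can fail to be injective only if $m(L)$ contains both nodes $u \in in(S_{ij\gamma})$ and $w \in out(S_{ij\gamma})$ with $(u,w)$ glued by $\gamma'$. Here I would use strong connectivity of $L$. First, $u$ has in-degree $0$ in $S_{ij\gamma}$, so its preimage in $L$ has in-degree $0$ and lies in $in(L)$. Similarly, the preimage of $w$ lies in $out(L)$. Strong connectivity then gives a path in $L$ from $m^{-1}(u)$ to $m^{-1}(w)$, and its image is a path from $u$ to $w$ in $S_{ij\gamma}$. Gluing $u$ with $w$ in $S_{ij\gamma'}$ turns this path into a cycle, contradicting the assumption that $S_{ij\gamma'}$ yields a critical pair and is therefore acyclic. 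This is the same argument as in the proof of Prop.~\ref{prop:gluingNodesIO}. I expect this step to be the main obstacle. The subtle point is the case where the path has length zero, i.e.\ $L$ has a node that is both input and output. That node would then be isolated, and I would rule it out using monogamy of the ma-cospan together with left-connectivity, or note that the path has positive length whenever $u \neq w$.

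Finally, $m;\epsilon_{ij\gamma'}$ is a mono match into an ma-hypergraph. By Prop.~\ref{prop:leftConnected_prop} it is convex and the boundary complement exists, and by Prop.~\ref{prop:pushoutCompl_leftConnected} the pushout complement exists uniquely. This gives a derivation from $I' \to S_{ij\gamma'} \leftarrow O'$, which completes the induced convex match.
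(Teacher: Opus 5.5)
Your proof is correct and follows the paper's argument: compose with $\epsilon_{ij\gamma'}$, get injectivity on hyperedges because $\gamma'$ only glues nodes, rule out a glued input/output pair inside $m(L)$ because a path between them becomes a cycle in the acyclic $S_{ij\gamma'}$, and conclude convexity from Prop.~\ref{prop:leftConnected_prop}. You justify the existence of that path more carefully than the paper does (the paper attributes it to convexity of $m$): you note that preimages of inputs/outputs of $S_{ij\gamma}$ are inputs/outputs of $L$ and then invoke strong connectivity; your aside that an isolated node would violate monogamy is wrong, but the argument never needs it, since $u \neq w$ already forces the path to be nonempty.
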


\begin{proof}
Let $m : L \to S_{ij\gamma}$ be a convex match. We will prove that $m;\epsilon_{ij\gamma'} : L \to S_{ij\gamma'}$ is a convex match as well.


Let $e_1$ and $e_2$ be two hyperedges of $L$ such that $(m;\epsilon_{ij\gamma'})(e_1) = (m;\epsilon_{ij\gamma'})(e_2)$, meaning that $\epsilon_{ij\gamma'}(m(e_1)) = \epsilon_{ij\gamma'}(m(e_2))$.
We must have $ m(e_1) = m(e_2)$ because $\epsilon_{ij\gamma'}$ only glues nodes, it is thus mono on hyperedges. We then deduce $e_1 = e_2$ because $m$ is mono. $m;\epsilon_{ij\gamma'}$ is therefore mono on hyperedges.

Let $v_1$ and $v_2$ be two nodes of $L$ such that $(m;\epsilon_{ij\gamma'})(v_1) = (m;\epsilon_{ij\gamma'})(v_2)$, meaning that $\epsilon_{ij\gamma'}(m(v_1)) = \epsilon_{ij\gamma'}(m(v_2))$. 

Suppose $m(v_1) \neq m(v_2)$. $\epsilon_{ij\gamma'}$ would glue $m(v_1)$ with $m(v_2)$. By construction of $\epsilon_{ij\gamma'}$ we either have $m(v_1)$ an input of $S_{ij\gamma}$ and  $m(v_2)$ an output of $S_{ij\gamma}$ or $m(v_1)$ an output of $S_{ij\gamma}$ and  $m(v_2)$ an input of $S_{ij\gamma}$. Let's suppose WLOG the first case. As $m$ is a convex match, there is a path $[m(e_1),\cdots,m(e_n)]$ from $m(v_1)$ to $m(v_2)$ in the image of $m$. By applying $\epsilon_{ij\gamma'}$ to the path, we get $[\epsilon_{ij\gamma'}(m(e_1)),\cdots,\epsilon_{ij\gamma'}(m(e_2))]$ a path from $\epsilon_{ij\gamma'}(m(v_1))$ to $\epsilon_{ij\gamma'}(m(v_2)) =\epsilon_{ij\gamma'}(m(v_1)) $. $S_{ij\gamma'}$ would not be acyclic which contradicts the hypothesis that $S_{ij\gamma'}$ yields a valid critical pair. Therefore we proved $m(v_1) = m(v_2)$. We then deduce $v_1 = v_2$ because $m$ is mono. $m;\epsilon_{ij\gamma'}$ is therefore mono on nodes.

$m;\epsilon_{ij\gamma'}$ is mono, we can therefore deduce that $m;\epsilon_{ij\gamma'} : L \to S_{ij\gamma'}$ is a convex match.
\end{proof}


\begin{corollary}
Any rewriting sequence on $I \to S_{ij\gamma} \leftarrow O$ induces a rewriting sequence on $I' \to S_{ij\gamma'} \leftarrow O'$.
\end{corollary}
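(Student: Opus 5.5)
We prove the corollary by induction on the length of the rewriting sequence, transporting each derivation step along the node-gluing map. Write $\epsilon = \epsilon_{ij\gamma'} \colon S_{ij\gamma} \to S_{ij\gamma'}$. The induction hypothesis is stronger than the statement. We maintain, for each hypergraph $G$ in the sequence starting at $I \to S_{ij\gamma} \leftarrow O$, a hypergraph $G'$ in the induced sequence and a morphism $\epsilon_G \colon G \to G'$ with three properties.
\begin{itemize}
\item $\epsilon_G$ is bijective on hyperedges.
\item $\epsilon_G$ only identifies pairs consisting of an input node and an output node of $G$.
\item $G'$ is an ma-hypergraph.
\end{itemize}
Concretely, $G'$ is the coequalizer of a gluing scheme on interface nodes of $G$. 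The base case is exactly the data produced by Algo.~\ref{algo3}, and $S_{ij\gamma'}$ is monogamous acyclic because it yields a critical pair.

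For the inductive step, consider a derivation $G \Rrightarrow H$ by a rule $L \leftarrow I_L + O_L \rightarrow R$ with mono match $m \colon L \to G$. First, Prop.~\ref{prop:convexMatch_optimisation} shows that $m;\epsilon_G$ is a mono match into $G'$. Its proof uses only the three invariant properties and acyclicity of $G'$, so it applies verbatim to $G$ and $G'$. By Prop.~\ref{prop:leftConnected_prop} this match is convex and the boundary complement condition holds. By Prop.~\ref{prop:pushoutCompl_leftConnected} the pushout complement $C'$ exists uniquely, so we obtain a derivation $G' \Rrightarrow H'$.

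Next we relate $H$ to $H'$. The gluing on $G$ only touches nodes that are global inputs or outputs. Every identified pair therefore either avoids $m(L)$ or sits in $m(I_L + O_L)$, since interface nodes of $G$ lying in $m(L)$ are interface nodes of $L$. It follows that the gluing scheme restricts to $C$. Pushouts commute with coequalizers (colimits commute with colimits), so $C'$ is the gluing of $C$ along the same pairs. Likewise $H'$ is the gluing of $H$ along the same interface pairs, which gives $\epsilon_H \colon H \to H'$.

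The invariants are then re-established. Bijectivity on hyperedges holds because no hyperedges are glued. The glued nodes remain an input and an output of $H$, because the global interface $n + m$ factors through $C$ and its images in $H$ stay the interface. Finally, $H'$ is ma because it is the result of a convex rewrite of an ma-cospan, which preserves ma-cospans by the soundness of convex DPOI rewriting of Bonchi et al.

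The main obstacle is checking that the gluing commutes with the pushout complement and that the interface nodes used by the gluing remain interface nodes after rewriting. This requires a careful diagram chase using the commuting triangle with $n + m$ in the definition of convex rewriting. Once this is done, chaining the steps gives the induced rewriting sequence on $I' \to S_{ij\gamma'} \leftarrow O'$.
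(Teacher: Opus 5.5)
\paragraph{Verdict.} Your proof is correct, and it is more complete than what the paper offers.

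\paragraph{Comparison with the paper.} The paper gives no separate argument. It places the corollary directly after Prop.~\ref{prop:convexMatch_optimisation} and means it to follow from it: a convex match $m\colon L\to S_{ij\gamma}$ yields the convex match $m;\epsilon_{ij\gamma'}$ into $S_{ij\gamma'}$, so the first step transports. Read literally, this covers only one step, because that proposition is about matches into $S_{ij\gamma}$ itself. Nothing in the paper shows that, after a rewrite, the results $H$ and $H'$ are again related by a node gluing to which the proposition applies.

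Your induction supplies exactly this missing piece. You isolate the invariant that the proof of Prop.~\ref{prop:convexMatch_optimisation} actually uses. You observe that glued nodes lying in $m(L)$ must lie in $m(I_L+O_L)$, so the gluing restricts to $C$. You then combine interchange of colimits with uniqueness of pushout complements along the mono $I_L+O_L\to L$ to obtain $C'$ and $H'$ as gluings of $C$ and $H$. The paper's route is a one-liner that leaves the multi-step case implicit. Yours handles sequences of any length, at the cost of three extra ingredients:
\begin{itemize}
\item the colimit-interchange argument;
\item the fact that $C\to G$ is mono, so glued nodes of $C$ are still images of $n+m$;
\item the result of Bonchi et al.\ that convex rewriting preserves ma-cospans.
\end{itemize}

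\paragraph{Points to tighten.}
\begin{enumerate}
\item The monicity argument needs strong connectivity of $L$, not convexity, so list it among what your generalised proof uses. In-degree $0$ of $m(v_1)$ in $G$ forces $v_1\in in(L)$, and likewise $v_2\in out(L)$. Strong connectivity then gives a nonempty path $v_1\leadsto v_2$, whose image closes a cycle in $G'$.
\item Your third invariant should say that $I'\to G'\leftarrow O'$ is an ma-cospan, not merely that $G'$ is an ma-hypergraph. The source of a convex derivation must be an ma-cospan.
\item When $[i_R,o_R]$ identifies an input with an output (as $R_2$ in the example does), two glued pairs can fall into a single class of $\epsilon_H$. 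So phrase the second invariant as ``any two distinct nodes identified by $\epsilon_G$ are global interface nodes of $G$, one an input and the other an output.'' This is what the monicity argument needs. It holds in $H$ because all glued nodes are images of $n+m$, and monogamy of $H'$ forbids merging two inputs that are not outputs, or two outputs that are not inputs.
\end{enumerate}
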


\begin{corollary}\label{cor:joinability_min_condition}
If $I \to S_{ij\gamma} \leftarrow O$ yields a joinable critical pair, then so does $I' \to S_{ij\gamma'} \leftarrow O'$.
\end{corollary}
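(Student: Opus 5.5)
We derive the corollary from the previous corollary (rewriting sequences transfer along $\epsilon_{ij\gamma'}$). Write the critical pair on $I \to S_{ij\gamma} \leftarrow O$ as $H_1 \Lleftarrow S_{ij\gamma} \Rrightarrow H_2$. The matches are $m_1 = \iota_1;\epsilon_{ij\gamma}$ and $m_2 = \iota_2;\epsilon_{ij\gamma}$. The critical pair on $I' \to S_{ij\gamma'} \leftarrow O'$ is $H'_1 \Lleftarrow S_{ij\gamma'} \Rrightarrow H'_2$, with matches $m_k;\epsilon_{ij\gamma'}$. By assumption, joinability of the first gives sequences $H_1 \rightderiv W \leftderiv H_2$. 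Prepending the two initial derivations, this gives two rewriting sequences out of $S_{ij\gamma}$: $S_{ij\gamma} \Rrightarrow H_1 \rightderiv W$ and $S_{ij\gamma} \Rrightarrow H_2 \rightderiv W$.

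\textbf{First step.} Apply the preceding corollary to each of the two sequences. The result should be sequences $S_{ij\gamma'} \Rrightarrow H''_1 \rightderiv W'$ and $S_{ij\gamma'} \Rrightarrow H''_2 \rightderiv W''$, with matches obtained by postcomposing with the induced node-gluing maps (Prop.~\ref{prop:convexMatch_optimisation} makes the matches convex). For this to help, the transferred sequences must be canonical. I would establish the following when unfolding the corollary. For a derivation $G \Rrightarrow H$ and a node-gluing $G \twoheadrightarrow G'$ of input/output pairs for which the induced match is convex, the induced derivation $G' \Rrightarrow H'$ has $H'$ equal to the pushout of $G \to G'$ along $G \to H$ on the interface. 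Concretely, $H'$ is $H$ with the same interface nodes glued.

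This holds because the context $C$ contains all the interface nodes of $G$, and the nodes being glued are inputs and outputs of $G$. So the gluing can be performed in $C$ and then pushed out. Pushouts commute with pushouts, and uniqueness of the pushout complement (Prop.~\ref{prop:pushoutCompl_leftConnected}) identifies the result. Also, $n+m$ is unaffected because the interfaces are carried along. The induced first steps then have matches $m_k;\epsilon_{ij\gamma'}$, so $H''_k = H'_k$.

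\textbf{Second step.} Because the transfer is given by one and the same gluing applied to the terminal interface, both sequences end in the same $W'$. This $W'$ is $W$ with the interface nodes identified exactly as in $\epsilon_{ij\gamma'}$. The interface of $W$ is $I+O$, shared with $S_{ij\gamma}$, since derivations preserve the interface $n+m$. Hence $H'_1 \rightderiv W' \leftderiv H'_2$, which is joinability.

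\textbf{Main obstacle.} The hard part is the naturality claim in the first step. We must check that every intermediate hypergraph of the transferred sequences is still monogamous acyclic, so that each step is a legal convex derivation. I would argue this inductively. Prop.~\ref{prop:convexMatch_optimisation} is stated only for $S_{ij\gamma}$, but its proof uses only that the glued target is acyclic and that the gluing merges inputs with outputs. Acyclicity of each glued intermediate should follow from the fact that rewriting in ma-cospans preserves the ``reachability'' between interface nodes up to what is joinable. If that fails, I would restrict to the joinability witness and argue directly that a cycle created in a glued intermediate would already yield a path from an output to an input in $S_{ij\gamma}$. Such a path would contradict acyclicity of $S_{ij\gamma'}$.
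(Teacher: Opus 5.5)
Your route is the paper's. The paper gives no separate proof of this corollary: it treats it as immediate from the preceding one (transport of rewriting sequences along $\epsilon_{ij\gamma'}$), which in turn rests only on Prop.~\ref{prop:convexMatch_optimisation}. Your naturality step is sound and supplies what the paper leaves implicit. The node gluing is a quotient $J \twoheadrightarrow J/{\sim}$ of the interface $J=I+O$, which is shared by every hypergraph in both sequences, and $J$ factors through each context $C$. Pasting pushouts therefore gives $G' = G +_J J/{\sim}$ and $H' = H +_J J/{\sim}$. This is a pushout along $J \to H$, not along a map $G \to H$, which does not exist. Hence both transported sequences end in $W +_J J/{\sim}$. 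Their first steps have matches $\iota_k;\epsilon_{ij\gamma};\epsilon_{ij\gamma'}$, so by Prop.~\ref{prop:pushoutCompl_leftConnected} they are exactly the two derivations of the critical pair on $S_{ij\gamma'}$.

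The step that remains open is the one you flag, and your sketch does not close it. Prop.~\ref{prop:convexMatch_optimisation} only covers matches into $S_{ij\gamma}$. For later steps you need each glued intermediate $G'_k$ to be acyclic; monogamy is automatic, since every glued pair is still an input and an output of each $G_k$. ``Rewriting preserves reachability up to what is joinable'' is not a usable fact. Also, the relevant paths run from inputs to outputs, not from outputs to inputs: no nonempty path starts at an output.

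The missing ingredient is left-connectivity, via the following lemma. Suppose $G \Rrightarrow H$ uses a left-connected rule, and in $H$ the node of an interface input $x$ reaches the node of an interface output $y$. This may be by the empty path, which covers right-hand sides that identify an input with an output. Then the same holds in $G$. Indeed, every maximal portion of the path lying in $R$ enters at an image of $I$ and leaves at an image of $O$. Strong connectivity of $L$ then provides a replacement path inside $L$ between the same context nodes.

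Now take a cycle in $H'$. It must meet a glued class, since $H$ is acyclic. At such a class it arrives through an output and leaves through an input, so it decomposes into input-to-output connections in $H$. By the lemma these yield a cycle in $G'$. Inducting from the acyclicity of $S_{ij\gamma'}$, every $G'_k$ is acyclic. The argument of Prop.~\ref{prop:convexMatch_optimisation} then makes each transported match mono, and Prop.~\ref{prop:leftConnected_prop} supplies convexity and the boundary complement.

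This is exactly where the hypothesis is used. Without left-connectivity the claim fails, because a right-hand side may connect an input to an output that its left-hand side did not.
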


By Cor.~\ref{cor:joinability_min_condition}, it suffices to enumerate the critical pairs where only hyperedges are glued, to determine if a left-connected rewrite system is locally confluent or not. Algo.~\ref{algo4} enumerates a sufficient subset of critical pairs necessary to determine local confluence.


\begin{algorithm}[H]
    \caption{An algorithm for enumerating sufficient critical pairs}\label{algo4}
    \SetKwInOut{Input}{Input}\SetKwInOut{Output}{Output}\SetKw{Yield}{yield}
    \Input{rewrite rules $\rho = \{ L_i \overset{f_i}{\leftarrow} K_i \overset{g_i}{\rightarrow} R_i \}_{i \in I}$}
    \Output{epimorphisms with interface $\{ \{ L_i + L_j \overset{\epsilon}{\twoheadrightarrow} S_{ij\gamma} \leftarrow I+O \}_{\gamma \in I_{ij}} \}_{(i,j) \in I^2}$}
    \For{$(i,j) \in I^2$}{
        \For{$\gamma \in \texttt{InducedHypergraphs}(\prod\limits_{l \in \Sigma}^{} \texttt{enumerateIndependentEdgeSets}($\\
        $\{e \mid e \in Hyperedges(L_i), label(e) = l\},\{e \mid e \in Hyperedges(L_j), label(e) = l\})$}{
            \If{$\gamma$ has at least a hyperedge}{
                $(S_{ij\gamma}, \epsilon_{ij\gamma}) = \mathtt{coeq}_{\gamma}(p^\gamma_1;\iota_1, p^\gamma_2;\iota_2)$\;
                \tcc{the coequaliser of (
                    \begin{tikzcd}[sep = scriptsize, ampersand replacement=\&,cramped]
                        \gamma \&\& {L_1+L_2} \&\& {S_{ij\gamma}}
                        \arrow["{p^{\gamma}_1;\iota_1}", curve={height=-6pt}, from=1-1, to=1-3]
                        \arrow["{p^{\gamma}_2;\iota_2}"', curve={height=6pt}, from=1-1, to=1-3]
                        \arrow["{\epsilon_{ij\gamma}}", dotted, from=1-3, to=1-5]
                    \end{tikzcd} in $\mathbf{Hyp}_\Sigma$)}
                $I = in(S_{ij\gamma})$\;
                $O = out(S_{ij\gamma})$\;
                \If{$I \xrightarrow{\subseteq} S_{ij\gamma} \xleftarrow{\subseteq} O $ is a ma-cospan}{
            
                    \Yield{$L_i + L_j \overset{\epsilon_{ij\gamma}}{\twoheadrightarrow} S_{ij\gamma} \xleftarrow{[\subseteq,\subseteq]} I+O$}\;
                    }
             
            }
        }
     }
\end{algorithm}

\section{Conclusion and Future Work} \label{sec:conclusion}

In this paper, we presented an algorithm that enumerates all critical pairs of a given left-connected convex DPOI rewrite systems. We proved its correctness and exhaustiveness.
The algorithm is centered around the two-fold process of gluing, firstly merging hyperedges and secondly merging inputs/outputs.
We further presented an optimisation of the algorithm that only merges hyperedges.

We are interested in complexity analysis of our algorithms (Algo.~\ref{algo3} and Algo.~\ref{algo4}), in particular evaluation of effectiveness of the optimisation.
Left-connectivity is crucial in our development, but it may be possible to extend Algo.~\ref{algo3} to non-left-connected rewrite systems, using \emph{formal path extensions} \cite{DBLP:journals/mscs/BonchiGKSZ22a}.
Another future direction is extension to string diagrams in monoidal closed categories, for which DPOI rewriting has been studied \cite{DBLP:conf/fscd/Alvarez-Picallo22} but critical pair analysis has yet been established.

\renewcommand{\emph}[1]{\textit{#1}}
\bibliographystyle{eptcs}
\bibliography{ref}

\clearpage

\appendix

\end{document}

%% file: preamble.tex
\usepackage{ulem}

\usepackage{quiver}
\usepackage{stmaryrd}
\usepackage{enumitem}
\usepackage{tikz}
\usetikzlibrary {cd,arrows.meta}
\usepackage{tikz-cd}
\usepackage{adjustbox}
\usetikzlibrary{positioning}
\usetikzlibrary{graphs}
\usetikzlibrary{backgrounds}
\usetikzlibrary{arrows.meta}
\usepackage{float} 

\newcommand \hide [1] {\bgroup}
\tikzset{
	mgvertex/.style={
		circle,
		fill,
		inner sep=0pt,
		minimum size=5pt,
		label=#1:{\tikzgraphnodetext},
		execute at begin node=\hide,
		graphs/as={}
	},
	mgvertex/.default={above},
	mg/.style={
		graphs/every graph/.style = {grow right sep = 15mm},
		every edge quotes/.style = {draw,fill=white,sloped,midway,->},
		baseline = (current bounding box.center)
	},
	k1/.pic = {
		\graph {
			0 [mgvertex=left] -!- 1 [mgvertex=right] ;
			2 [mgvertex=left] -!- 3 [mgvertex=right] ;
		} ;
	},
	l1/.pic = {
		\graph {
			0 [mgvertex=left] -> ["a"] 1 [mgvertex=right] ;
			2 [mgvertex=left] -> ["b"] 3 [mgvertex=right] ;
		} ;
	},
	l2/.pic = {
		\graph {
			4 [mgvertex=left] -> ["a"] 5 [mgvertex=right];
		} ;
	},
	k2/.pic = {
		\graph {
			4 [mgvertex=left]-!- 5 [mgvertex=right];
		} ;
	}
}
\usepackage{amssymb}
\usepackage{mathtools}
\usepackage{hyperref}
\usepackage{tikz-cd}
\usepackage[ruled,linesnumbered]{algorithm2e}

\newcommand{\leftderiv}{\stackrel{*}{\Lleftarrow}_\mathcal{R}}
\newcommand{\rightderiv}{\stackrel{*}{\Rrightarrow}_\mathcal{R}}

\NewDocumentCommand \catspan   {mmmO{}O{}} {{#1 \xlongleftarrow{#4} #2 \xlongrightarrow{#5} #3}}
\NewDocumentCommand \catcospan {mmmO{}O{}} {{#1 \xlongrightarrow{#4} #2 \xlongleftarrow{#5} #3}}

\tikzcdset{scale cd/.style={every label/.append style={scale=#1},
    cells={nodes={scale=#1}}}}
    
\tikzcdset{scale label/.style={every label/.append style={scale=#1}}}
    
\tikzcdset{scale node/.style={cells={nodes={scale=#1}}}}

\setlist{topsep=0pt, partopsep=0pt, itemsep=2pt, parsep=0pt}
\newcommand{\Hyp}{\ensuremath{\mathbf{Hyp}} }

\usepackage{amsthm}
\theoremstyle{definition}
\newtheorem{definition}{Definition}[section]

\theoremstyle{plain}
\newtheorem{proposition}[definition]{Proposition}

\newtheorem{theorem}[definition]{Theorem}
\newtheorem{corollary}[definition]{Corollary}

\newtheorem{example}[definition]{Example}

\newtheorem*{notation*}{Notation}
\newcommand{\thistheoremname}{}
\newtheorem*{generic*}{\thistheoremname}
\newenvironment{freenamed*}[1]
  {\renewcommand{\thistheoremname}{#1}%
   \begin{generic*}}
  {\end{generic*}}
\theoremstyle{remark}

\renewcommand{\emph}[1]{\textbf{#1}}

\DeclareMathOperator{\Nodes}{\mathit{Nodes}}
\DeclareMathOperator{\HEdges}{\mathit{HEdges}}